%% file: main.tex
\documentclass[11pt]{article}

\input{myarticle.sty}
\input{macrosetup}

\title{Counting thresholds for perfect matchings in hypergraphs}
\authDetails{Strahinja Gvozdi\'c}{sgvozdic@ethz.ch}{}[1]
\affil[1]{ETH Z\"urich\\Switzerland}
\date{}

\begin{document}
\maketitle

\input{abstract}

\input{intro}
\input{1deg}
\input{counting}
\input{approx-counting}
\input{concl}

\nocite{carnicer1999linear}
\bibliography{refs}

\appendix
\pagebreak
\input{append}

\end{document}

%% file: macrosetup.tex
\newcommand{\vw}{\mathbf{u}}

\newcommand{\fpmy}{\mathbf{y}}
\newcommand{\ctinf}{\tilde{\beta}_d(k)}

%% file: abstract.tex
\begin{abstract}
    In a $k$-uniform hypergraph, the minimum $d$-degree for some $0\le d\le k-1$ is the minimum number of edges containing any given $d$-set of vertices. An extension of the classical Dirac theorem guarantees that whenever the minimum $d$-degree of a $k$-uniform $n$-vertex hypergraph, $k\mid n$, is larger than a certain Dirac threshold, it contains at least one perfect matching. Moreover, it has been known for some time, due to Kwan, Safavi, and Wang \cite{kwan2026counting}, that for $d\ge k/2$ such hypergraphs contain not only one, but ``many'' perfect matchings, that is, at least as many as are expected in a random hypergraph with the same edge density. However, it has also been known that such a result could not be hoped for in general, as it already fails for $(d,k)=(1,3)$.

    In this paper we introduce new notions of the \emph{counting thresholds} and \emph{approximate counting thresholds}, above which a hypergraph is guaranteed to have at least this many perfect matchings. We show that these thresholds are well-defined and nontrivial for all $d,k,n$, that they are asymptotically related, and finally, we derive improved upper bounds by reducing to cases with smaller $d$ and $k$.
\end{abstract}

%% file: intro.tex
\section{Introduction}\label{sec-intro}

Dirac's theorem, commonly regarded as one of the fundamental results in graph theory, states that any graph $G$ on $n\ge3$ vertices with minimum degree at least $\frac{n}{2}$ contains a Hamilton cycle (i.e. a cycle that visits each vertex of $G$ exactly once). As this minimum degree condition already appears quite strong, a natural question arises: what is the minimum number of Hamilton cycles that can be guaranteed to exist in such a graph? This question was first raised in the literature by Bondy \cite{bondy1996basic}. Significant progress towards the correct answer was later made by S\'arkozy, Selkow and Szemer\'edi \cite{sarkozy2003number}, as an application of Szemer\'edi's celebrated regularity lemma \cite{szemeredi1975regular}. They showed that a graph satisfying Dirac's condition (we refer to such graphs simply as \emph{Dirac graphs} in the remainder of this paper) contains at least $c^nn!$ Hamilton cycles for some (small) constant $c>0$. They also conjectured that this constant can be improved to $1/2-o(1)$.

We first need to understand where this conjectured constant comes from. To that end, let $G(n,p)$ be the Erd\H{o}s--R\'enyi random graph for some $p$, that is, we include every edge among $n$ vertices independently with probability $p$. Then, by Chernoff bounds, it follows that with high probability its minimum degree is $(1-o(1))np$, and also with high probability -- which was shown by Janson \cite{janson1994numbers} -- the number of Hamilton cycles is $(1-o(1))\frac{1}{2}(n-1)!\,p^n=(1-o(1))^n\,n!\,p^n$ (i.e. is concentrated around the expected number). Thus, letting $p=\frac{1}{2}+\epsilon$ for $\epsilon>0$, we find that with high probability, $G$ is a Dirac graph and has at least $(\frac{1}{2}-o(1))^nn!$ Hamilton cycles, supporting the conjecture by S\'ark\"ozy, Selkow, and Szemer\'edi \cite{sarkozy2003number}. 

This intuition was later confirmed by Cuckler and Kahn \cite{cuckler2009entropy,cuckler2009hamiltonian}, who showed that any graph with minimum degree $d\geq\frac{n}{2}$ contains at least $\left(\frac{d}{e+o(1)}\right)^n$ Hamilton cycles, which is exactly what we would expect in a random graph with $p=\frac{d}{n}+\epsilon$, whose minimum degree is at least $d$ (with high probability). The random graph also certifies the tightness of this bound.

In this paper, however, we will focus on perfect matchings. Note that as long as the number of vertices of a Dirac graph is even, it produces at least one perfect matching, simply by taking every other edge of a Hamilton cycle. In fact, the results of Cuckler and Kahn \cite{cuckler2009entropy,cuckler2009hamiltonian} extend to perfect matchings: any graph with minimum degree $d\ge\frac{n}{2}$ (with $n$ even) has at least $\left(\frac{d}{e+o(1)}\right)^{n/2}$ perfect matchings, and this bound is tight.

Cuckler and Kahn essentially showed how the number of perfect matchings relates to the solution of a certain convex optimisation problem, which is easier to study in practice. Before we can state their results precisely, we need to make a few definitions.

\begin{definition}\label{def-fpm}
    Let $G=(V,E)$ be a $k$-uniform hypergraph. Then a weight assignment $\fpm\in[0,1]^E$ is a \emph{fractional perfect matching} of $G$ if for every vertex $v\in V$, it holds that $\sum_{e\ni v}\fpm_e=1$.
\end{definition}

\begin{definition}\label{def-entropy}
    Let $G=(V,E)$ be a $k$-uniform hypergraph and let $\fpm$ be its fractional perfect matching. Then the \emph{entropy} of $\fpm$ is given by
    $$h(\fpm):=-\sum_{e\in E}\fpm_e\ln \fpm_e,$$
    with the convention $0\ln0=0$. Moreover, the entropy of the hypergraph $G$ is defined as
    $$h(G):=\sup_{\fpm} h(\fpm),$$
    where $\fpm$ varies among all fractional perfect matchings of $G$.
\end{definition}

Let $\Phi(G)$ denote the number of perfect matchings in graph $G$. Now the main result of Cuckler and Kahn \cite{cuckler2009entropy,cuckler2009hamiltonian} is as follows.

\begin{theorem}[\cite{cuckler2009entropy,cuckler2009hamiltonian}]\label{thm-cuckler-kahn}
    Let $G$ be an $n$-vertex Dirac graph with $n$ even. Then
    $$\Phi(G)=e^{h(G)}\left(e^{-1}+o(1)\right)^{n/2}.$$
    Moreover, if $\delta(G)=d\ge\frac{n}{2}$, then $h(G)\ge \frac{n}{2}\ln d$, whence
    $$\Phi(G)\ge \Phi(K_n)(p+o(1))^{n/2},$$
    where $p=d/n$ and $K_n$ is a complete graph on $n$ vertices.
\end{theorem}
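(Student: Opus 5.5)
The statement has two parts: the asymptotic formula $\Phi(G)=e^{h(G)}(e^{-1}+o(1))^{n/2}$, and the entropy lower bound $h(G)\ge\frac{n}{2}\ln d$ together with the resulting comparison to $\Phi(K_n)$. I would treat them separately, starting with the easier second part.

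\textbf{Entropy bound.} The natural first attempt is the uniform weighting $\fpm_e=1/d$ on every edge. This is a fractional perfect matching only when $G$ is $d$-regular, and then
$$h(\fpm)=|E|\cdot\frac{\ln d}{d}=\frac{nd}{2}\cdot\frac{\ln d}{d}=\frac{n}{2}\ln d.$$
For a general Dirac graph I would instead produce a fractional perfect matching with $\fpm_e\le 1/d$ on every edge. Given such an $\fpm$, since $\sum_e\fpm_e=n/2$ and $-\ln\fpm_e\ge\ln d$ for each edge, we get
$$h(\fpm)=\sum_e\fpm_e(-\ln\fpm_e)\ge\frac{n}{2}\ln d.$$
The existence of such an $\fpm$ is a polytope-feasibility question: find a perfect fractional $1$-factor with capacities $1/d$. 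Scaling by $d$, it asks for a fractional $d$-factor with edge capacities $1$. By Tutte's $f$-factor criterion, or equivalently by LP duality and max-flow on the bipartite double cover, this reduces to checking for all disjoint $S,T\subseteq V$ that
$$d|S|-d|T|+e(T,V\setminus S)\ge0.$$
When $|T|>|S|$, each vertex of $T$ has at least $d-|S|$ neighbours outside $S$, so $e(T,V\setminus S)\ge|T|(d-|S|)$. It then suffices to have $|T|(d-|S|)\ge d(|T|-|S|)$, which is equivalent to $|S|(d-|T|)\ge0$. When $|T|>d$, I would use instead that $|T|\le n-|S|$ and $d\ge n/2$. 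This verification is routine.

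\textbf{Comparison with $K_n$.} This follows from the formula together with $h(K_n)=\frac{n}{2}\ln(n-1)$, which is attained by the uniform weighting by symmetry and concavity of the entropy. Dividing the two instances of the formula gives
$$\frac{\Phi(G)}{\Phi(K_n)}\ge\bigl(d/(n-1)\bigr)^{n/2}(1+o(1))^{n/2}=(p+o(1))^{n/2}.$$

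\textbf{Asymptotic formula.} For the upper bound I would use the entropy method. Let $M$ be a uniformly random perfect matching, so $\ln\Phi(G)=H(M)$. Define $\fpm_e=\Pr[e\in M]$; this is a fractional perfect matching. Reveal the vertices in a random order and apply Shearer- or Radhakrishnan-type chain rule bounds to the partner of each vertex. The chain rule gives
$$H(M)\le\sum_v\mathbb{E}\,\ln(\#\text{remaining choices}),$$
and the averaging over random orders produces the factor $e^{-n/2}$, in the same way as in Radhakrishnan's proof of Brégman's theorem. One still has to convert the resulting quantity into $h(\fpm)-\frac{n}{2}+o(n)$. Doing this cleanly requires a Kahn-style argument that $\fpm$ is spread out, which uses the Dirac condition to show that no edge carries too much weight.

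For the lower bound I would take the entropy-maximising $\fpm$. Its optimality conditions give a multiplicative form $\fpm_{uv}=\lambda_u\lambda_v$, and Dirac-ness makes it spread out, meaning $\fpm_e=O(1/n)$ for every edge. I would then run a random greedy or nibble process sampling edges proportionally to $\fpm$, and finish the resulting near-perfect matching into a perfect one using absorbers, which Dirac-ness supplies. Counting the outcomes of this process gives at least $e^{h(\fpm)}(e^{-1}-o(1))^{n/2}$ perfect matchings.

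The main obstacle is this lower bound, and in particular proving spreadness of the optimiser and controlling the nibble's error terms. For that step I would rely directly on the cited Cuckler--Kahn analysis rather than redo it.
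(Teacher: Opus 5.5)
\textbf{There is a genuine gap in your entropy bound.} Your key claim is that every graph with $\delta(G)=d\ge n/2$ has a fractional perfect matching with $\fpm_e\le 1/d$ on every edge. This claim is false, so the case $|T|>d$ of your Tutte/Anstee verification cannot be completed.

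A vertex of degree exactly $d$ forces $\fpm_e=1/d$ on all of its edges. Now take the wheel on $6$ vertices (a hub joined to every vertex of a $5$-cycle), which has $\delta=3=n/2$. More generally, add a universal vertex $u$ to any $(d-1)$-regular graph on $n-1$ vertices, with $d$ odd and $n/2\le d\le n-2$; for example $n=10$, $d=7$ gives a strictly Dirac graph. Every edge meets a vertex of degree exactly $d$, so every edge is forced to weight $1/d$, and then $u$ receives $(n-1)/d>1$.

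In your criterion this corresponds to $S=\{u\}$ and $T$ any $d+1$ other vertices. Then $d|S|-d|T|+\sum_{v\in T}|N(v)\setminus S|=d-d(d+1)+(d+1)(d-1)=-1<0$.

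The theorem itself survives. For the wheel, the optimal fractional perfect matching puts $1/5$ on spokes and $2/5$ on rim edges, with entropy $3\ln 5-2\ln 2>3\ln 3$. But it only gets there by using edges of weight larger than $1/d$. An $\ell_\infty$ cap is simply too strong a target.

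\textbf{What you need is an averaged bound.} Since $\sum_e\fpm_e=n/2$, Jensen's inequality with the probability weights $2\fpm_e/n$ gives $h(\fpm)\ge-\frac n2\ln\bigl(\frac 2n\sum_e\fpm_e^2\bigr)$. So it suffices to find a fractional perfect matching with $\sum_e\fpm_e^2\le n/(2d)$.

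This is exactly how the paper proceeds (Theorem~\ref{thm-1-deg} with $k=2$). Let $M$ be the incidence matrix. The signless Laplacian $MM^\top$ has diagonal entries $\deg(v)$ and off-diagonal entries in $\{0,1\}$, so it is strictly PMD when $d>n/2$. Hoffman's Theorem~\ref{thm-pmd} then gives $\mathbf u:=(MM^\top)^{-1}\mathbf 1_V\ge 0$. Setting $\fpm:=M^\top\mathbf u$ gives a fractional perfect matching with $\sum_e\fpm_e^2=\mathbf u^\top MM^\top\mathbf u=\sum_v\mathbf u_v\le\frac1d\sum_v\deg(v)\mathbf u_v=\frac1d\sum_e\fpm_e=\frac{n}{2d}$.

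At $d=n/2$ the matrix is only PMD, and it is singular only for $K_{n/2,n/2}$. That graph is regular, so your uniform weighting works there.

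Your comparison with $K_n$ is fine once the entropy bound is in place. For the asymptotic formula the paper, like you, just cites Cuckler and Kahn, so that part is not where the problem lies.
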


This problem of counting perfect matchings in Dirac graphs extends naturally to hypergraphs.

\begin{definition}\label{def-pm}
    Let $G$ be a $k$-uniform hypergraph. A \emph{perfect matching} in $G$ is a set of edges covering all vertices of $G$, with no two edges sharing a common vertex. We denote the number of perfect matchings in $G$ by $\Phi(G)$.
\end{definition}

\begin{definition}\label{def-d-deg}
    Let $G=(V,E)$ be a $k$-uniform hypergraph and let $d$ be a positive integer with $0\le d\le k-1$. Given a $d$-subset $A=\{v_1,\dots,v_d\}$ of $V$, we denote by $\deg_dA$ its \emph{$d$-degree}, that is, the number of edges in $E$ that contain $A$ as a subset. We also define $\delta_d(G):=\min_{A\in\binom{V}{d}}\deg A$ the minimum $d$-degree among all $d$-subsets of $V$.
\end{definition}

\begin{definition}\label{thres-pm}
    Let $n$ and $k$ be positive integers with $k\mid n$, and let $d$ be an integer with $1\le d\le k-1$. Then $m_d(k,n)$ denotes the $d$-degree \emph{Dirac threshold} of $n$-vertex $k$-uniform hypergraphs, that is, the minimum value of $\delta$ such that $\Phi(G)\geq 1$ whenever $\delta_d(G)\ge \delta$. Moreover, let
    $$\alpha_d(k):=\lim_{\substack{n\to\infty\\k\mid n}} \frac{m_d(k,n)}{\binom{n-d}{k-d}}$$
    be the \emph{asymptotic Dirac threshold} for $d$-degree in $k$-uniform hypergraphs.
\end{definition}

Ferber and Kwan \cite{ferber2022dirac} showed that the limit in the previous theorem exists, thereby justifying the definition. It has been conjectured that the exact values of (asymptotic) Dirac thresholds arise from only two extremal families of hypergraphs (see, for example, \cite{alon2012large} for a discussion of these constructions).


\begin{conjecture}[\cite{han2009perfect,kuhn2009embedding}]\label{conj-alpha}
    For all positive integers $k,d$ with $d\le k-1$,
    $$\alpha_d(k)=\max\left\{\frac{1}{2},\,1-\left(1-\frac{1}{k}\right)^{k-d}\right\}.$$
\end{conjecture}

Very recently, several independent groups of authors \cite{fu2026sharp,nie2026feige,stander2026conditional} announced proofs of the long-\linebreak standing Feige's conjecture. This conjecture is itself a special case of the conjecture of Samuels, whose relationship with the problem of determining Dirac thresholds of hypergraphs has been studied in \cite{alon2012large}. It follows from the results of this paper that Conjecture \ref{conj-alpha} holds assuming Feige's conjecture. Before these breakthrough proofs of Feige's conjecture, only sporadic exact values of (asymptotic) Dirac thresholds were known.

Even without knowing these thresholds precisely, it is still possible to count perfect matchings in Dirac hypergraphs. Since in these results it is usually required that the asymptotic Dirac threshold is strictly surpassed, we make the following definition.

\begin{definition}\label{def-gamma-dirac}
    Let $n$, $k$, and $d$ be positive integers, $k\mid n$, $1\le d\le k-1$, and let $\gamma>0$. A $k$-uniform $n$-vertex hypergraph is said to be \emph{$(d,\gamma)$-Dirac} (or simply $\gamma$-Dirac if $d$ is understood from the context) if $\delta_d(G)\ge(\alpha_d(k)+\gamma)\binom{n-d}{k-d}$.
\end{definition}

First result in this direction is due to Ferber, Krivelevich, and Sudakov \cite{ferber2014counting} who showed that a $(k-1,\gamma)$-Dirac $k$-uniform hypergraph $G$ satisfies
$$\Phi(G)\ge \Phi(\multicomp kn)\left(\frac{\delta_{k-1}(G)}{n}+o(1)\right)^{n/k},$$
where $\multicomp kn$ denotes the complete $k$-uniform hypergraph on $n$ vertices. This was later extended by Kang, Kelly, K\"uhn, Osthus, and Pfenninger \cite{kang2024perfect}, and independently Pham, Sah, Sawhney, and Simkin \cite{pham2022toolkit} to all $1\le d\le k-1$, but with larger error term: they showed that in a $(d,\gamma)$-Dirac $k$-uniform $n$-vertex hypergraph, the number of perfect matchings is at least $n^{(1-1/k)n}\exp(-O(n))$.

Finally, the hypergraph analogue of the results of Cuckler and Kahn was recently achieved in its full strength due to work of Kwan, Safavi, and Wang \cite{kwan2026counting}.

\begin{theorem}[\cite{kwan2026counting}]\label{thm-kwan}
    Let $\gamma>0$ be a real constant, and let $n,k,d$ be integers with $k\mid n$, $1\le d\le k-1$. Then every $(d,\gamma)$-Dirac $k$-uniform $n$-vertex hypergraph $G$ satisfies
    \begin{equation}\label{eq-phi-entropy}
        \Phi(G)=e^{h(G)}\left(e^{-1}+o(1)\right)^{\left(1-\frac{1}{k}\right)n}.
    \end{equation}
    Moreover, if $d\ge \frac{k}{2}$, then
    \begin{equation}\label{eq-h-ge-random}
        h(G)\ge\frac{n}{k}\ln\left(\frac{k}{n}\,\frac{\binom{n}{d}}{\binom{k}{d}}\,\delta_d(G)\right),
    \end{equation}
    whence the number of perfect matchings satisfies
    \begin{equation}\label{eq-phi-ge-random}
        \Phi(G)\ge\Phi(K^k_n)\,(p+o(1))^{n/k},
    \end{equation}
    where $p=\delta_d(G)/\binom{n-d}{k-d}$.
\end{theorem}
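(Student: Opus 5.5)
The natural route is to prove the three claims in order: first the general entropy formula \eqref{eq-phi-entropy} for all $(d,\gamma)$-Dirac hypergraphs, then the entropy lower bound \eqref{eq-h-ge-random} for $d\ge k/2$, and finally combine them by a Stirling computation. Throughout, let $\fpm^*$ be the entropy-maximising fractional perfect matching, which exists by compactness. By strict concavity of $h$ and the Lagrange conditions, $\fpm^*$ has the scaling form $\fpm^*_e=\prod_{v\in e}\mu_v$ on its support, for positive vertex weights $\mu_v$.

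\textbf{Step 1: the entropy formula \eqref{eq-phi-entropy}.} I would first show that the Dirac condition forces $\fpm^*$ to be \emph{spread}, i.e.\ $\fpm^*_e\le C\,n^{-(k-1)}$ for all $e$ and all $\mu_v$ within constant factors of each other. The idea is that if some vertex weights were very unbalanced, one could shift mass along alternating structures, which exist by the $\gamma$-surplus over $\alpha_d(k)$, and strictly increase the entropy.

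\emph{Upper bound.} Write $\ln\Phi(G)$ as the entropy of a uniformly random perfect matching and expose it vertex by vertex in random order, as in Cuckler--Kahn and Kahn's entropy proof of Brégman. Concavity then gives $\ln\Phi(G)\le h(\fpm)-(1-\frac1k+o(1))n$ for the marginal fractional matching $\fpm$, hence $\le h(G)-(1-\frac1k+o(1))n$.

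\emph{Lower bound.} Run a random greedy (nibble) process that picks edges with probabilities proportional to $\fpm^*$. Spreadness lets one track via martingale concentration that $\fpm^*$ restricted to the leftover vertices stays close to a fractional perfect matching of the leftover hypergraph. This produces $e^{h(G)}(e^{-1}+o(1))^{(1-1/k)n}$ near-perfect matchings, each completable by an absorbing structure built beforehand using the $\gamma$-Dirac condition. I expect this step, keeping the nibble analysis exact to the $e^{o(n)}$ level, to be the main obstacle.

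\textbf{Step 2: the entropy bound \eqref{eq-h-ge-random} for $d\ge k/2$.} I would use convex duality: $h(G)=\inf_{\lambda}\big(\sum_v\lambda_v+\sum_e e^{-1-\sum_{v\in e}\lambda_v}\big)$ up to the standard normalisation. It then suffices to show that for every $\lambda\in\mathbb R^V$,
\[
\sum_{e\in E}e^{-\lambda(e)}\ \ge\ \frac{k}{n}\,\frac{\binom nd}{\binom kd}\,\delta_d(G)\cdot\frac{n}{k}\,e^{-\frac kn\sum_v\lambda_v}.
\]
Here is where $d\ge k/2$ enters. For an edge $e$, split it as $e=A\cup B$ with $A$ a $d$-set. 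Since $|B\setminus A|\le d$, one can average over pairs of $d$-subsets covering $e$ and apply AM--GM to bound $e^{-\lambda(e)}$ below by products of $e^{-\lambda(A)}$-type terms. Then count, using $\deg_d A\ge\delta_d(G)$ for every $d$-set $A$, and apply Jensen/Maclaurin over all $d$-sets to pass from $\sum_A e^{-\frac kd\lambda(A)}$ to $\binom nd e^{-\frac kn\sum_v\lambda_v}$. If the direct duality computation is messy, I would instead write down an explicit fractional perfect matching: put weight $\propto 1/\prod$ of $d$-degrees on each edge, averaged over its $d$-subsets, and correct it with a small perturbation, which is feasible thanks to the $\gamma$-slack.

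\textbf{Step 3: the counting bound \eqref{eq-phi-ge-random}.} Plug \eqref{eq-h-ge-random} into \eqref{eq-phi-entropy}. Using $\delta_d(G)=p\binom{n-d}{k-d}$ and $\frac{\binom nd\binom{n-d}{k-d}}{\binom kd}=\binom nk$, the bound reads $\Phi(G)\ge\big(\frac kn\binom nk p\big)^{n/k}e^{-(1-1/k+o(1))n}$. Comparing this with $\Phi(\multicomp kn)=\frac{n!}{(n/k)!\,(k!)^{n/k}}$ via Stirling gives $\Phi(\multicomp kn)(p+o(1))^{n/k}$.
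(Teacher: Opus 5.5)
Your Step~3 is the same Stirling computation the paper carries out at the start of Section~\ref{sec-1-deg-conditions}, and it is the only part of this statement the paper proves. The bounds (\ref{eq-phi-entropy}) and (\ref{eq-h-ge-random}) are imported from \cite{kwan2026counting}. Your Step~1 is a roadmap of that paper's main theorem (spreadness of the maximum-entropy matching, a weighted nibble tracked to within $e^{o(n)}$, absorption), not a proof, so it can only stand as a citation.

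The concrete failure is in Step~2. Your duality reduction is sound, and in fact an equivalence: (\ref{eq-h-ge-random}) holds if and only if $\sum_{e\in E}e^{-\lambda(e)}\ge\frac{\binom{n}{d}}{\binom{k}{d}}\delta_d(G)$ for every $\lambda$ with $\sum_v\lambda_v=0$. But your chain for this inequality uses only $d\ge k/2$ and $\deg_dA\ge\delta_d(G)$. It never uses the Dirac condition, and without it the inequality is false.
\begin{itemize}
\item Take $k=2$, $d=1$, $G=K_{n/2-1,n/2+1}$. Here $\delta_1(G)=n/2-1$, but there is no fractional perfect matching. Setting $\lambda\equiv t$ on the small side and $\lambda\equiv -t\frac{n/2-1}{n/2+1}$ on the large side gives $\sum_v\lambda_v=0$, while $\sum_e e^{-\lambda(e)}\to 0$ as $t\to\infty$.
\item The AM--GM step goes the wrong way. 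Since $e^{-\lambda(e)}=\prod_{A\in\binom{e}{d}}\bigl(e^{-\frac{k}{d}\lambda(A)}\bigr)^{1/\binom{k}{d}}$ is a geometric mean, AM--GM bounds it from above, not below.
\item Your Jensen step needs the intermediate inequality $\sum_e e^{-\lambda(e)}\ge\frac{\delta_d(G)}{\binom{k}{d}}\sum_{A}e^{-\frac{k}{d}\lambda(A)}$. This already fails for $K_{n/2,n/2}$ with $e^{-\lambda}\equiv 1$ on one side and $\equiv\varepsilon$ on the other: the left side is $\frac{n^2}{4}\varepsilon$ and the right side is $\frac{n^2}{8}(1+\varepsilon^2)$. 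Yet the final inequality holds there with equality.
\end{itemize}
Your fallback (explicit degree-based weights plus a \emph{small} correction) is not an argument either. The vertex sums of such weights can be off by constant factors, and you give no reason why the correction costs no entropy.

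Lower-bounding $\sum_e e^{-\lambda(e)}$ by the geometric mean of the vertex weights is a global, Hall-type statement, so the Dirac condition has to enter globally. For graphs this is the Cuckler--Kahn bound \cite{cuckler2009hamiltonian}, or Theorem~\ref{thm-1-deg} with $k=2$. There, $\delta_1>n/2$ makes the signless Laplacian $A$ strictly PMD, so $u=A^{-1}\mathbf{1}\ge 0$, and $M^{\top}u$ is a fractional perfect matching with $\sum_e x_e^2\le \frac{n}{2\delta_1}$.

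For $d\ge k/2$ you can reduce to that case with the liftings in the proofs of Theorems~\ref{thm-ent-sub} and~\ref{thm-ent-div}. First pass to links of $(2d-k)$-sets. Then pair up $(k-d)$-sets to get a graph on $\binom{V}{k-d}$, which is Dirac because $\alpha_d(k)\ge\frac12$ gives $\delta_d(G)\ge(\frac12+\gamma)\binom{n-d}{k-d}$. These liftings introduce no error terms of their own, so the exact graph bound yields (\ref{eq-h-ge-random}) for large $n$.
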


Although at first sight one could hope to extend the results Kwan, Safavi, and Wang to all $1\le d\le k-1$, it was observed by Sauermann in \cite[Section 5]{ferber2023counting} that already for $(d,k)=(1,3)$ this is doomed to fail. We will discuss Sauermann's construction in more detail in Section \ref{sec-concl}. The main contribution of this paper is the following theorem which shows that for any $d$, as long as the minimum $d$-degree is large enough, one can still find at least as many perfect matchings as is expected in a random hypergraph with the same edge density.

\begin{theorem}\label{thm-main}
    Let $k\ge2$ and $1\le d< k-1$ be positive integers. If $G$ is a $k$-uniform hypergraph that satisfies $\delta_d(G)> \frac{k-1}{k}\frac{n}{n-1}\binom{n-d}{k-d}$, then
    $$h(G)\ge\frac{n}{k}\ln\left(\frac{k}{n}\,\frac{\binom{n}{d}}{\binom{k}{d}}\,\delta_d(G)\right).$$
\end{theorem}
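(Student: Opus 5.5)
The plan is to reduce the entropy bound to finding a fractional perfect matching with uniformly small weights. If $\fpm$ is a fractional perfect matching with $\fpm_e\le M$ for every $e$, then $-\ln\fpm_e\ge\ln(1/M)$. Moreover $\sum_e\fpm_e=n/k$, since each edge covers $k$ vertices and each vertex is covered with total weight $1$. Hence
$$h(G)\ge h(\fpm)\ge \frac nk\ln\frac1M .$$
It therefore suffices to construct a fractional perfect matching with
$$\fpm_e\le M:=\frac nk\,\frac{\binom kd}{\binom nd\,\delta_d(G)}\qquad\text{for all }e\in E.$$
This $M$ is exactly what the natural ``$d$-set spreading'' heuristic produces. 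Give each $d$-set $A$ mass $c=\frac{n}{k\binom nd}$, so that the total mass is $n/k$, and let $A$ spread its mass evenly over the $\deg_d A\ge\delta_d(G)$ edges containing it. Each edge then receives at most $\binom kd c/\delta_d(G)=M$. This vector has the right total mass and the right upper bound, but its vertex loads need not equal $1$. The real task is to show that \emph{some} vector with these capacities has all vertex loads exactly $1$.

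To do this I would pass to LP duality. A vector $\fpm\in[0,M]^E$ with $\sum_{e\ni v}\fpm_e=1$ for all $v$ exists if and only if a Farkas-type condition holds: for every $y\in\mathbb R^V$,
$$\sum_{v\in V}y_v\;\le\;M\sum_{e\in E}\Bigl(\sum_{v\in e}y_v\Bigr)_+ .$$
To verify this I would lower-bound the right-hand side by averaging over $d$-sets. Write $y(e)=\sum_{v\in e}y_v$. Then
$$\sum_e y(e)_+=\frac1{\binom kd}\sum_{A\in\binom Vd}\ \sum_{e\supseteq A}y(e)_+ .$$
For each fixed $A$, the inner sum is over at least $\delta_d(G)$ of the $(k-d)$-sets completing $A$, and we only keep positive contributions. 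This should be compared to the analogous quantity in the complete hypergraph, where the inequality is an identity-type computation giving $\sum_v y_v$ with $M$ replaced by $1/\binom{n-1}{k-1}$. Dividing by $\binom kd$ then turns the factor $\frac{\binom kd}{\binom nd\delta_d}$ in $M$ into $\frac1{\binom nd\delta_d}$, which is what should make the bookkeeping match.

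The main obstacle is the comparison step. For a fixed $A$, I must show that deleting at most a $(1-p)$-fraction of its completions, with $p=\delta_d/\binom{n-d}{k-d}$, can lower $\sum_{e\supseteq A}y(e)_+$ by at most a factor $p$ in the aggregate after summing over $A$. This is not true pointwise: an adversary removes the edges with the largest $y(e)$. My first attempt would be the following. Sort the vertices by $y_v$ and reduce, by linearity and convexity of $(\cdot)_+$, to extremal $y$ taking two values (a scaled indicator of a set $S$ minus a constant). For such $y$ the inequality becomes a counting statement: edges meeting $S$ in at least a threshold number of vertices must be plentiful. The degree hypothesis $\delta_d>\frac{k-1}{k}\frac n{n-1}\binom{n-d}{k-d}$ guarantees that every $d$-set misses fewer than a $\frac1k$-fraction (up to the $\frac n{n-1}$ correction) of its completions. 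For the critical two-valued vectors, for instance $S$ a single vertex with weight $k-1$ against the rest, I expect this exact constant to be precisely what is needed to rule out the complementary configuration. That is where I would expect the condition $\frac{k-1}{k}\frac n{n-1}$ to enter and be tight.
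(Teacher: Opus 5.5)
\paragraph{The $L^\infty$ reduction targets a false statement.} The intermediate claim you reduce to is false under the hypothesis, so the plan cannot be completed however the comparison step is handled. You need a fractional perfect matching with $\fpm_e\le M$ for \emph{every} edge; for $d=1$ this cap is $M=1/\delta_1(G)$.

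Take $k=3$, $d=1$ (allowed, since $d<k-1$). Split $V=X\cup Y$ with $|X|=\alpha n$, and let $E$ be all triples not contained in $X$.
\begin{itemize}
\item Every $x\in X$ has degree $\binom{n-1}{2}-\binom{\alpha n-1}{2}\approx(1-\alpha^2)\frac{n^2}{2}$, which is the minimum degree. The hypothesis $\delta_1(G)>\frac{n(n-2)}{3}$ therefore holds for large $n$ whenever $\alpha^2<\frac13$.
\item Each $x\in X$ lies in exactly $1/M$ edges, so every edge meeting $X$ is forced to carry weight exactly $M$.
\item Each $w\in Y$ lies in $D_w=\binom{n-1}{2}-\binom{(1-\alpha)n-1}{2}\approx(2\alpha-\alpha^2)\frac{n^2}{2}$ edges meeting $X$. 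This exceeds $\delta_1(G)$ once $\alpha>\frac12$.
\end{itemize}
So for $\frac12<\alpha<\frac{1}{\sqrt{3}}$ the load at $w$ is larger than $1$.

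In your dual language, $y=\one_X-\one_{\{w\}}$ gives $\sum_v y_v=|X|-1$ but $M\sum_e y(e)_+=|X|-D_w/\delta_1(G)<|X|-1$. The same construction defeats the $L^\infty$ approach for $d=1$ and every $k\ge3$, taking $\frac12<\alpha<k^{-1/(k-1)}$. So your averaging-over-$d$-sets inequality is not merely unproven (your sketch of the comparison step is only a hope); it is false. Likewise, the constant $\frac{k-1}{k}\frac{n}{n-1}$ does not come from a tight two-valued Farkas configuration.

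\paragraph{The missing idea: a second-moment bound suffices.} Entropy does not need a uniform cap. Since $\sum_e\fpm_e=n/k$, Jensen gives $h(\fpm)\ge-\frac{n}{k}\ln\left(\frac{k}{n}\sum_e\fpm_e^2\right)$. Hence it suffices to have $\sum_e\fpm_e^2\le\frac{n}{k\,\delta_1(G)}$, after first reducing to $d=1$ via Proposition \ref{prop-mono-delta}. Indeed $\frac{k}{n}\frac{\binom{n}{d}}{\binom{k}{d}}\delta_d(G)=\frac{\binom{n-1}{k-1}}{\binom{n-d}{k-d}}\delta_d(G)\le\delta_1(G)$, and the hypothesis becomes $\delta_1(G)>\frac{n}{k}\binom{n-2}{k-2}$.

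The paper then takes $\fpm=B^{\top}u$, where $B$ is the vertex--edge incidence matrix and $u=(BB^{\top})^{-1}\one_V$.
\begin{itemize}
\item The matrix $BB^{\top}$ has diagonal entries $\deg_1(v)$, off-diagonal entries $\deg_2(v,w)\le\binom{n-2}{k-2}$, and row means $\frac{k}{n}\deg_1(v)$.
\item The degree condition is therefore exactly strict positive mean-dominance. This is where the constant comes from. Theorem \ref{thm-pmd} then gives $u\ge0$.
\item Finally, $\sum_e\fpm_e^2=u^{\top}\one_V=\sum_v u_v\le\frac{1}{\delta_1(G)}\sum_v\deg_1(v)u_v=\frac{1}{\delta_1(G)}\sum_e\fpm_e=\frac{n}{k\,\delta_1(G)}$.
\end{itemize}
In the example above this $\fpm$ necessarily exceeds $M$ on some edges. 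That costs nothing for the entropy bound, but it is fatal for your scheme.
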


Asymptotically, this theorem shows that we get the desired number of perfect matchings whenever the minimum $d$-degree of $G$ deviates from the theoretical maximum by no more than a $1/k$-fraction. The question then arises to determine the exact thresholds above which entropy must satisfy (\ref{eq-h-ge-random}), and we call these thresholds \emph{counting thresholds} and denote $c_d(k,n)$. It follows from Theorem \ref{thm-main} that they exist and are nontrivial.

Unlike the case of Dirac thresholds, here we are not able to show that the ratio $c_d(k,n)/\binom{n-d}{k-d}$ tends to a limit as $n$ goes to infinity. Nevertheless, we can still give a result that is almost equally good in practice (i.e. after an application of Theorem \ref{thm-main}): whenever the minimum $d$-degree of a $k$-uniform $n$-vertex graph (for $n$ large enough) is asymptotically larger than $\liminf_{n\to\infty}c_d(k,n)/\binom{n-d}{k-d}$, its entropy satisfies (\ref{eq-h-ge-random}) up to an error term of order $o(n)$. This result justifies introducing the notion of \emph{approximate counting thresholds} $\beta_d(k)$ to compensate for the error term.

Finally, we are able to give slightly better bounds on approximate counting thresholds by reducing to the cases with smaller $d$ and $k$. Namely, we show that $\beta_d(k)$ is upper-bounded by $\beta_1(k/d)$ whenever $d\mid k$, and by $\beta_{d-l}(k-l)$ for all $l<d$.

Note that in the statement of Theorem \ref{thm-main} we do not require that $k\mid n$. In fact, it is crucial for us that the notions of fractional perfect matchings and entropy still make sense even if no perfect matching exists due to divisibility constraints.

\subsection{Structure of the paper}\label{sec-struct}

After a short exposition of preliminary results in Section \ref{sec-preliminaries}, in Section \ref{sec-1-deg-conditions} we show Theorem \ref{thm-main}. In Section \ref{sec-count} we introduce counting and approximate counting thresholds, and show how they relate. In Section \ref{sec-approx-count} we give some general upper bounds on approximate counting thresholds. Finally, in Section \ref{sec-concl} we present some concluding remarks and possibilities for further development. For completeness, we also include in Appendix \ref{sec-append} a proof due to Hoffman of the important Theorem \ref{thm-pmd}.

\subsection{Preliminaries}\label{sec-preliminaries}




It is worth noting that a lower bound on $\delta_{d'}(G)$ for some $d'$ implies a bound on $\delta_d(G)$ for all $d\le d'$ via a double counting argument.

\begin{proposition}\label{prop-mono-delta}
    Let $G$ be a $k$-uniform $n$-vertex graph. Then
    $$\frac{\delta_0(G)}{\binom{n}{k}}\ge\cdots\ge\frac{\delta(G)}{\binom{n-k+1}{1}}.$$
    Consequently, it holds that $\alpha_0(k)\ge\alpha_1(k)\ge\cdots\ge\alpha_{k-1}(k)$ for all $k$.
\end{proposition}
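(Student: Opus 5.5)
The plan is to compare $\delta_d(G)$ and $\delta_{d-1}(G)$ by double counting the pairs $(A,e)$ with $A$ a $d$-subset, $e\in E(G)$ and $A\subseteq e$. This gives $\delta_{d-1}(G)/\binom{n-d+1}{k-d+1}\ge\delta_d(G)/\binom{n-d}{k-d}$ for each $1\le d\le k-1$. Chaining these inequalities from $d=k-1$ down to $d=0$ yields the displayed chain, with $\delta(G)$ read as $\delta_{k-1}(G)$ and $\binom{n-k+1}{1}=\binom{n-(k-1)}{k-(k-1)}$.

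Fix a $(d-1)$-set $B$. Count the pairs $(v,e)$ with $v\notin B$, $e\in E$ and $B\cup\{v\}\subseteq e$. Counting by $v$: there are $n-d+1$ choices of $v$, each $d$-set $B\cup\{v\}$ lies in at least $\delta_d(G)$ edges, so the count is at least $(n-d+1)\delta_d(G)$. Counting by $e$: each edge containing $B$ has exactly $k-d+1$ vertices outside $B$, so the count is exactly $(k-d+1)\deg_{d-1}B$. Hence
$$\deg_{d-1}B\ge\frac{n-d+1}{k-d+1}\,\delta_d(G).$$
Using the identity $\binom{n-d+1}{k-d+1}=\frac{n-d+1}{k-d+1}\binom{n-d}{k-d}$ and dividing, we get $\deg_{d-1}B/\binom{n-d+1}{k-d+1}\ge\delta_d(G)/\binom{n-d}{k-d}$. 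Taking the minimum over $B$ gives the normalized inequality. For $d-1=0$ the set $B$ is empty and the same argument bounds $|E|=\delta_0(G)$.

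For the consequence, let $d'\le d$. Take $\delta=m_d(k,n)$, the threshold in the paper's definition. If $\delta_{d'}(G)\ge\delta'$, monotonicity alone does not directly force $\delta_d(G)$ to be large, so I will argue through an extremal graph instead.

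Take $G$ with $\delta_d(G)=m_d(k,n)-1$ and no perfect matching; one exists by the minimality in the definition. By the chain, $\delta_{d'}(G)/\binom{n-d'}{k-d'}\ge(m_d(k,n)-1)/\binom{n-d}{k-d}$. Since $G$ has no perfect matching, $m_{d'}(k,n)>\delta_{d'}(G)$. Therefore
$$\frac{m_{d'}(k,n)}{\binom{n-d'}{k-d'}}>\frac{m_d(k,n)-1}{\binom{n-d}{k-d}}.$$
Letting $n\to\infty$ along $k\mid n$, the $-1$ term vanishes because $\binom{n-d}{k-d}\to\infty$ when $d<k$. The limits exist by Ferber and Kwan, so $\alpha_{d'}(k)\ge\alpha_d(k)$.

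Two edge cases need checking. For $d'=0$, the quantity $\alpha_0$ is defined by the same formula. If $m_d(k,n)=0$ there is no such $G$, but then the inequality is trivial. The only real subtlety is this extremal-graph step: it is where the direction of the inequality for thresholds is justified, since the degree monotonicity alone goes the wrong way for a direct implication.
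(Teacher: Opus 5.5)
Your proof is correct and is essentially the paper's: the paper double counts in one step for arbitrary $d<d'$ (edges $e\supseteq S$ paired with $(d'-d)$-subsets of $e\setminus S$), while you treat consecutive levels and telescope to the same chain, and your extremal-graph argument validly supplies the consequence for $\alpha_d(k)$, which the paper only asserts. One correction to your commentary: the direct implication does not go the wrong way, because starting from a $d$-degree hypothesis, $\delta_d(G)\ge m_{d'}(k,n)\binom{n-d}{k-d}/\binom{n-d'}{k-d'}$ forces $\delta_{d'}(G)\ge m_{d'}(k,n)$ and hence a perfect matching, so $m_d(k,n)\le\lceil m_{d'}(k,n)\binom{n-d}{k-d}/\binom{n-d'}{k-d'}\rceil$ directly, and your extremal-graph step is exactly the contrapositive of this.
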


\begin{proof}
    Let $0\le d< d'\le k-1$, and let $S\in \binom{V(G)}{d}$ be an arbitrary $d$-set of vertices. Then
    \begin{align*}
        \deg_dS &\,\,\,=\,\,\, \binom{k-d}{d'-d}\deg_dS\Big/\binom{k-d}{d'-d} \,\,\,=\,\,\, \frac{1}{\binom{k-d}{d'-d}}\sum_{S\subseteq e\in E(G)} \sum_{A\in\binom{e\setminus S}{d'-d}}1 \\
        &\,\,\,=\,\,\, \frac{1}{\binom{k-d}{d'-d}}\sum_{A\in\binom{V(G)\setminus S}{d'-d}}\sum_{S\cup A\subseteq e\in E(G)}1 \,\,\,=\,\,\, \frac{1}{\binom{k-d}{d'-d}}\sum_{A\in\binom{V(G)\setminus S}{d'-d}}\deg_{d'}(S\cup A) \\
        &\,\,\,\ge\,\,\, \frac{1}{\binom{k-d}{d'-d}}\sum_{A\in\binom{V(G)\setminus S}{d'-d}}\delta_{d'}(G) \,\,\,=\,\,\, \binom{n-d}{d'-d}\delta_{d'}(G)\Big/\binom{k-d}{d'-d}.
    \end{align*}
    Since $\binom{n-d}{d'-d}/\binom{k-d}{d'-d}=\binom{n-d}{k-d}/\binom{n-d'}{k-d'}$ and $S$ was arbitrary, the proposition follows.
\end{proof}

As we will see in Section \ref{sec-1-deg-conditions}, it is desirable to find a set of conditions on the entries of real square matrices that guarantee that the row or column sums of the inverse matrix are positive (or nonnegative). The following definition gives one such set of conditions, which was originally discovered by Hoffman \cite{hoffman1965nonsingularity} in his work on the nonsingularity of real matrices.

\begin{definition}\label{def-pmd}
    An $n\times n$ real matrix $A=(a_{ij})$ is said to be \emph{positive mean-dominant} or PMD if
    \begin{align}
        \sum_{j=1}^n a_{ij}\ge 0\hspace{0.5cm}&\text{for } i=1,\dots,n\label{eq-pmd1} \\
        \frac{1}{n}\sum_{j=1}^n a_{ij}\ge a_{ik} \hspace{0.5cm} &\text{for } i=1,\dots,n,\,k=1,\dots,n, \,k\neq i \label{eq-pmd2}
    \end{align}
    If (\ref{eq-pmd1}) and (\ref{eq-pmd2}) are both strict, $A$ is said to be \emph{strictly positive mean-dominant}.
\end{definition}

(Strictly) positive mean-dominant matrices are referred to as $B$- and $B_0$-matrices, respectively, by some authors (e.g. \cite{christensen2019comparative,pena2001class}). The following theorem gives the key properties of PMD matrices that we will need in this paper.

\begin{theorem}[\cite{hoffman1965nonsingularity}]\label{thm-pmd}
    Let $A$ be an $n\times n$ real PMD matrix. Then $\det A\ge 0$, with inequality strict if $A$ is strictly PMD. If moreover $A$ is invertible (which is always the case for $A$ strictly PMD), we have $\sum_{i=1}^n a^{-1}_{ij}\ge0$ for all $j=1,\dots,n$, where $A^{-1}=(a^{-1})_{ij}$.
\end{theorem}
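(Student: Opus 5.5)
We reduce to the strictly PMD case by a perturbation. In that case we split $A$ into a diagonally dominant $Z$-matrix plus a nonnegative rank-one matrix, and then use the Sherman--Morrison formula.

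\textbf{Step 1 (decomposition).} Let $A$ be strictly PMD. Write $e=(1,\dots,1)^T$ and, for each row $i$, put $r_i:=\max\{0,\max_{k\neq i}a_{ik}\}\ge 0$. Set $B:=A-re^T$, so $b_{ij}=a_{ij}-r_i$.
\begin{itemize}
\item For $j\ne i$ we have $b_{ij}\le 0$, by the choice of $r_i$.
\item The $i$-th row sum of $B$ is $\sum_j a_{ij}-nr_i$. If $r_i=0$ this is positive by the strict form of (\ref{eq-pmd1}). Otherwise $r_i=a_{ik}$ for some $k\ne i$, and the row sum is positive by the strict form of (\ref{eq-pmd2}).
\end{itemize}
Hence $B$ has nonpositive off-diagonal entries and positive row sums. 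Consequently $b_{ii}>\sum_{j\ne i}|b_{ij}|$, so $B$ is strictly row diagonally dominant with positive diagonal.

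\textbf{Step 2 ($M$-matrix facts; the main obstacle).} This is the step I expect to need the most care, since it is the one place where a genuine spectral argument enters. We need $\det B>0$ and $B^{-1}\ge 0$ entrywise.
\begin{itemize}
\item Write $B=D(I-N)$, where $D=\mathrm{diag}(b_{ii})$ and $N\ge 0$ has row sums $<1$. Then $\|N\|_\infty<1$, so the Neumann series gives $B^{-1}=\sum_{m\ge0}N^mD^{-1}\ge 0$.
\item For the determinant, consider $B_t:=D+t(B-D)$ for $t\in[0,1]$. Each $B_t$ is strictly diagonally dominant, hence nonsingular by Gershgorin.
\item So $t\mapsto\det B_t$ never vanishes on $[0,1]$. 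Since $\det B_0=\det D>0$, continuity gives $\det B=\det B_1>0$.
\end{itemize}

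\textbf{Step 3 (rank-one update).} We have $A=B+re^T$ with $r\ge0$ and $B^{-1}\ge0$. Hence $\sigma:=e^TB^{-1}r\ge 0$.
\begin{itemize}
\item By the matrix determinant lemma, $\det A=\det B\,(1+\sigma)>0$. In particular $A$ is invertible.
\item By Sherman--Morrison,
$$e^TA^{-1}=e^TB^{-1}-\frac{(e^TB^{-1}r)\,e^TB^{-1}}{1+\sigma}=\frac{e^TB^{-1}}{1+\sigma}\ge 0.$$
These entries are exactly the column sums $\sum_i a^{-1}_{ij}$, so all column sums are nonnegative.
\end{itemize}

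\textbf{Step 4 (nonstrict case).} If $A$ is only PMD, consider $A_\varepsilon:=A+\varepsilon I$ for $\varepsilon>0$.
\begin{itemize}
\item Row $i$'s sum increases by $\varepsilon$, so (\ref{eq-pmd1}) becomes strict.
\item The row mean increases by $\varepsilon/n$ while the off-diagonal entries are unchanged, so (\ref{eq-pmd2}) becomes strict.
\end{itemize}
Thus $A_\varepsilon$ is strictly PMD, and Step 3 gives $\det A_\varepsilon>0$. Letting $\varepsilon\to0$ yields $\det A\ge0$. If $A$ is invertible, then $A_\varepsilon^{-1}\to A^{-1}$ by continuity of inversion, so the column sums of $A^{-1}$ are limits of nonnegative numbers and hence nonnegative.
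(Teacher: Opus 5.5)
Your proof is correct, and it takes a different route from the paper's.

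\textbf{The paper's route (following Hoffman).} It first shows that every strictly PMD matrix is nonsingular by a convex-cone argument. Strict PMD says $A_{i*}M_i>0$, where $M_i$ has columns $-e_k+\frac{1}{n}\mathbf{1}$ with the $i$-th column replaced by $\mathbf{1}$. The cones $\pm C(M_i)$ cover the whole space, so no nonzero vector lies in the kernel. It then gets $\det A\ge 0$ because $\det(A+\lambda I)>0$ for large $\lambda$ and $A+\lambda I$ is strictly PMD, hence nonsingular, for every $\lambda>0$. Finally it gets the column sums from the adjugate formula. The $i$-th row sum of the cofactor matrix is the determinant of $A$ with its $i$-th row replaced by $\mathbf{1}$, and that matrix is again PMD, so its determinant is nonnegative.

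\textbf{Your route.} You split $A=B+re^{T}$, which is the splitting Pe\~na uses for $B$-matrices. This reduces everything to the standard facts $\det B>0$ and $B^{-1}\ge 0$ for strictly diagonally dominant $Z$-matrices. The matrix determinant lemma and Sherman--Morrison then finish the proof.

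\textbf{What each approach buys.} Yours gives the explicit formula $e^{T}A^{-1}=e^{T}B^{-1}/(1+\sigma)$. It also gives, at no extra cost, the stronger statement the paper mentions but omits. For strictly PMD $A$ the column sums of $A^{-1}$ are strictly positive, since the nonsingular nonnegative matrix $B^{-1}$ has no zero column. The paper's argument needs no $M$-matrix theory. Its cofactor identity handles every invertible PMD matrix directly, whereas you need the perturbation $A+\varepsilon I$ and continuity of inversion (which you handle correctly). Its cone framework is also what extends to Hoffman's weighted-mean and reversed-inequality generalisations.
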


We include the proof of this theorem in Appendix \ref{sec-append} for completeness.

%% file: 1deg.tex
\section{1-degree conditions for many perfect matchings}\label{sec-1-deg-conditions}

In this section, we show a (nontrivial) $1$-degree condition that ensures a Dirac hypergraph has many perfect matchings -- more precisely, that it satisfies (\ref{eq-phi-ge-random}). This condition will, in turn, imply Theorem \ref{thm-main} via Proposition \ref{prop-mono-delta}.

For completeness, we show how the lower bound on entropy in (\ref{eq-h-ge-random}) implies the lower bound on the number of perfect matchings (\ref{eq-phi-ge-random}) whenever (\ref{eq-phi-entropy}) holds, which is the case for all $(d,\gamma)$-Dirac hypergraphs by Theorem \ref{thm-kwan}. Note that
$$\ln\Phi(\multicomp kn) =\ln \frac{n!}{(n/k)!(k!)^{n/k}}=n\ln n - \frac{n}{k}\ln\frac{n}{k}-\left(1-\frac{1}{k}\right)n-\frac{n}{k}\ln(k!)+o(n)$$
by Stirling's approximation, so the logarithm of the right-hand side of (\ref{eq-phi-ge-random}) equals
$$\ln\Phi(\multicomp kn)+\frac{n}{k}\ln p+o(n)=n\ln n+\frac{n}{k}\ln\left(\frac{k}{n}p\right)-\left(1-\frac{1}{k}\right)n -\frac{n}{k}\ln(k!)+o(n).$$
On the other hand, supposing (\ref{eq-phi-entropy}) and (\ref{eq-h-ge-random}) hold and recalling that $p=\delta_d(G)/\binom{n-d}{k-d}$, the logarithm of the left-hand side satisfies
\begin{align*} 
    \ln\Phi(G) &\ge h(G)-\left(1-\frac{1}{k }\right)n+o(n) \\
    &\ge \frac{n}{k}\ln\left(\frac{k }{n }\frac{\binom{n }{d}}{\binom{k}{d}}\binom{n-d}{k-d}p\right)-\left(1-\frac{1}{k }\right)n+o(n) \\
    &=\frac{n }{k }\ln\left(\frac{k }{n }\binom{n}{k}p\right)-\left(1-\frac{1}{k }\right)n+o(n) \\
    &= \frac{n }{k }\ln\left(\frac{k }{n }p\right)+n\ln n -\frac{n}{k}\ln(k!)-\left(1-\frac{1}{k }\right)n+o(n),
\end{align*}
completing the proof.

Therefore, in order to acheive the desired lower bound on the number of perfect matchings in a $(d,\gamma)$-Dirac hypergraph, it suffices to show that (\ref{eq-h-ge-random}) holds. As announced, the following theorem does this whenever $1$-degree is large enough.

\begin{theorem}\label{thm-1-deg}
    Let $k\ge2$ be an integer. If $G=(V,E)$ is a $k$-uniform hypergraph that satisfies $\delta_1(G)> \frac{n}{k}\binom{n-2}{k-2}$, then $h(G)\ge \frac{n}{k}\ln{\delta_1(G)}$.
\end{theorem}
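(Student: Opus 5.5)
The plan is to build an explicit fractional perfect matching of a particular algebraic form, use Theorem \ref{thm-pmd} to show it is nonnegative, and then bound its entropy by Jensen's inequality.

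\textbf{Step 1 (the construction and the linear system).} I look for $\fpm$ of the form $\fpm_e=\sum_{v\in e}w_v$, with one weight $w_v$ per vertex. For a vertex $u$,
$$\sum_{e\ni u}\fpm_e=\deg(u)\,w_u+\sum_{v\neq u}\operatorname{codeg}(u,v)\,w_v .$$
So $\fpm$ is a fractional perfect matching exactly when $Mw=\mathbf 1$. Here $M$ is the symmetric $n\times n$ matrix with $M_{uu}=\deg(u)$ and $M_{uv}=\operatorname{codeg}(u,v)$, the number of edges containing both $u$ and $v$.

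\textbf{Step 2 (nonnegativity via PMD).} Each edge through $u$ contributes $k-1$ to the off-diagonal part of row $u$, so row $u$ of $M$ sums to $k\deg(u)>0$. Since $\operatorname{codeg}(u,v)\le\binom{n-2}{k-2}$, the hypothesis $\delta_1(G)>\frac nk\binom{n-2}{k-2}$ gives
$$\frac1n\sum_j M_{uj}=\frac{k\deg(u)}{n}>\binom{n-2}{k-2}\ge M_{uv}\quad\text{for all }v\neq u.$$
Hence $M$ is strictly PMD. By Theorem \ref{thm-pmd}, $M$ is invertible and the column sums of $M^{-1}$ are nonnegative. As $M$ is symmetric, these equal the row sums, so $w=M^{-1}\mathbf 1\ge0$ entrywise. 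Consequently $\fpm_e\ge0$ for every edge. The vertex equations then force $\fpm_e\le1$, so $\fpm$ is a genuine fractional perfect matching.

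\textbf{Step 3 (entropy bound).} We have $\sum_e\fpm_e=n/k$, since each vertex is covered with total weight $1$. Normalising $\fpm$ to a probability distribution on $E$ and using concavity of $\ln$ (Jensen),
$$h(\fpm)=\sum_e\fpm_e\ln\frac1{\fpm_e}\ \ge\ \frac nk\ln\frac{n/k}{\sum_e\fpm_e^2}.$$
The quantity $\sum_e\fpm_e^2$ is computable from the special form of $\fpm$:
$$\sum_e\fpm_e^2=\sum_e\fpm_e\sum_{v\in e}w_v=\sum_v w_v\sum_{e\ni v}\fpm_e=\sum_v w_v.$$
Summing all equations of $Mw=\mathbf 1$, and using that column $v$ of $M$ sums to $k\deg(v)$, gives $\sum_v k\deg(v)\,w_v=n$. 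Because $w\ge0$, this yields $k\,\delta_1(G)\sum_v w_v\le n$, that is, $\sum_e\fpm_e^2\le n/(k\delta_1(G))$. Substituting into the Jensen bound gives $h(G)\ge h(\fpm)\ge\frac nk\ln\delta_1(G)$.

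The only nontrivial point is Step 2, namely the nonnegativity of $w$. This is precisely where the degree threshold $\frac nk\binom{n-2}{k-2}$ enters, through the PMD condition and Theorem \ref{thm-pmd}.
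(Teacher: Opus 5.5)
Your proposal is correct and follows essentially the same route as the paper: the ansatz $\fpm=M^{\top}w$, strict PMD of the signless Laplacian $MM^{\top}$ via row sums $k\deg(u)$, nonnegativity of $w=(MM^{\top})^{-1}\one_V$ by Theorem~\ref{thm-pmd} and symmetry, and Jensen together with $\sum_e\fpm_e^2=\sum_v w_v$. The only cosmetic difference is that you get $\sum_v k\deg(v)w_v=n$ by summing the equations of $Mw=\one$ using column sums, where the paper obtains the same identity as $\sum_v\deg(v)w_v=\sum_e\fpm_e=n/k$.
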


Our proof follows the ideas of Cuckler and Kahn \cite[Theorem 1.3]{cuckler2009hamiltonian}. However, we use the theory of PMD matrices to give an easier proof of \cite[(24)]{cuckler2009hamiltonian} and generalise it to higher uniformities.

\begin{proof}
    Suppose $\fpm\in\R^{E}$ is a fractional perfect matching of $G$. Note that
    \begin{equation}\label{eq-pm-sum}
        \one_{E}\trans\fpm=\sum_{e\in E}\fpm_e=\frac{1}{k}\sum_{e\in E}\sum_{v\in e}\fpm_{e}=\frac{1}{k}\sum_{v\in V}\sum_{e\ni v} \fpm_e=\frac{1}{k}\sum_{v\in V}1=\frac{n}{k},
    \end{equation}
    so
    $$h(G)\ge h(\fpm)=-\sum_{e\in E}\fpm_e\ln\fpm_e=-\frac{n}{k}\sum_{e\in E}\frac{k\fpm_e}{n}\ln\fpm_e\ge -\frac{n}{k}\ln\frac{k}{n}\sum_{e\in E}\fpm_e^2$$
    by Jensen's inequality applied to the concave function $x\mapsto\ln x$. Therefore, it suffices to find a fractional perfect matching $\fpm$ of $G$ that satisfies $\fpm\trans\fpm=\sum_{e\in E}\fpm_e^2\le \frac{n}{k\delta_1(G)}$.

    Now let $M\in\mathcal{M}_{V\times E}(\R)$ be the vertex-edge incidence matrix of $G$. We can rewrite the condition for $\fpm$ being a fractional perfect matching as $\fpm\ge0$ and $M\fpm=\one_V$. Moreover, given a positive vertex-weight assignment $\vw\in\R_{\ge0}^V$, $M\trans\vw$ gives a positive edge-weight assignment (since $M\ge0$), which becomes a fractional perfect matching if it satisfies $\one_V=M(M\trans\vw)=MM\trans\vw$ as well.
    
    In the graph case ($k=2$), matrix $(a_{vw})_{v,w\in V}=A:=MM\trans$ is well-studied and known as the signless Laplacian matrix of $G$. Here, it will suffice to note that $a_{vv}=\deg_1(v)$ and $a_{vw}=\deg_2(v,w)$ for $v\ne w$. We check that $A$ is strictly PMD under the hypotheses of the theorem: all entries are nonnegative and diagonal entries are strictly positive, so the row sums are strictly positive as well; as for the mean dominance, we have for every $v\in V$ and $w\ne v$,
    \begin{align*}
        \frac{1}{n}\sum_{s\in V}a_{vs}&=\frac{1}{n}\left(\deg_1(v)+\sum_{s\ne v}\deg_2(v,s)\right)=\frac{k}{n}\deg_1(v)\\&>\binom{n-2}{k-2}\ge\deg_2(v,w)=a_{vw},
    \end{align*}
    which is exactly what we wanted.

    By Theorem \ref{thm-pmd}, $A$ is invertible and $A^{-1}$ has nonnegative column sums. That is, $\one_V\trans A^{-1}\ge0$. But $A$ is symmetric, so this implies that $\vw:=A^{-1}\one_V=\left(\one_V\trans A^{-1}\right)\trans\ge0$. Therefore, $\fpm:=M\trans\vw$ is a well-defined fractional perfect matching.

    We only need to check that $\fpm$ satisfies $\fpm\trans\fpm\le\frac{n}{k\delta_1(G)}$. We have:
    \begin{align*}
        \fpm\trans\fpm&=\vw\trans MM\trans\vw=\vw\trans A\vw=\vw\trans\one_V=\sum_{v\in V}\vw_v\\&=\frac{1}{\delta_1(G)}\sum_{v\in V}\delta_1(G)\vw_v\le\frac{1}{\delta_1(G)}\sum_{v\in V}\deg_1(v)\vw_v\\&=\frac{1}{\delta_1(G)}\sum_{v\in V}\sum_{e\ni v}\vw_v=\frac{1}{\delta_1(G)}\sum_{e\in E}\sum_{v\in e}\vw_v\\&=\frac{1}{\delta_1(G)}\sum_{e\in E}\fpm_e=\frac{n}{k\delta_1(G)},
    \end{align*}
    so we are done.
\end{proof}

\begin{remark}
    The minimum $1$-degree condition in Theorem \ref{thm-1-deg} is asymptotically of the form $1-1/k$, which strictly dominates $\alpha_1(k)$ for $1\le d<k/2$, owing to the results of K\"uhn, Osthus, and Townsend \cite{kuhn2014fractional} who showed that $\alpha_d(k)\le 1-\frac{d}{k}-\frac{k-d-1}{k^{k-d}}$. Thus, hypergraphs that satisfy the conditions of Theorem \ref{thm-1-deg} are $(1,\gamma)$-Dirac for sufficiently small $\gamma>0$, and therefore satisfy (\ref{eq-phi-entropy}).
\end{remark}

\begin{remark}
    Note that in the previous proof the strict inequality in the minimum degree condition $\delta_1(G)>\frac{n}{k}\binom{n-2}{k-2}$ implies that $A$ is \emph{strictly} PMD and thus invertible. Thus the proof could hold even when $\delta_1(G)=\frac{n}{k}\binom{n-2}{k-2}$ if one can show that $A$ is still invertible.
    
    In fact, if the signless Laplacian matrix of a $k$-uniform hypergraph $G$ is singular, then $G$ must be a \emph{partially bipartite hypergraph}, that is, a hypergraph with three vertex parts $P$, $N$, and $Z$ and with all edges intersecting both $P$ and $N$ or being fully contained in $Z$. Calculations show that for a $k$-uniform $(1,\gamma)$-Dirac partially bipartite hypergraph $G$ there can be no vertices in part $Z$, since they would have too small $1$-degree. 
    
    Moreover, for $k=2$ the only Dirac bipartite graph is $K_{n/2,n/2}$, which can be treated separately (see Cuckler and Kahn \cite[Lemma 3.3]{cuckler2009hamiltonian}). For $k=3$, one can show that even the $(1,\gamma)$-Dirac bipartite hypergraphs have $A$ invertible, so that Theorem \ref{thm-1-deg} holds in the slightly stronger form with $\delta_1(G)\ge\frac{n}{k}\binom{n-2}{k-2}$. 
\end{remark}

Theorem \ref{thm-main} is now an easy corollary by Proposition \ref{prop-mono-delta}.

\begin{proof}[Proof of Theorem \ref{thm-main}]
    From $\delta_d(G)> \frac{k-1}{k}\frac{n}{n-1}\binom{n-d}{k-d}$ it follows that
    $$\delta_1(G)\ge \frac{\binom{n-1}{k-1}}{\binom{n-d}{k-d}}\delta_d(G) > \frac{n}{k}\binom{n-2}{k-2}$$
    by Proposition \ref{prop-mono-delta}, so the result follows immediately from Theorem \ref{thm-1-deg}.
\end{proof}

%% file: counting.tex
\section{Counting thresholds}\label{sec-count}

The results of the previous section lead to the following definition.

\begin{definition}\label{def-count}
    Let $n,k,d$ be positive integers, $1\le d\le k-1$. Then the $d$-degree \emph{counting threshold} for $n$ and $k$, denoted $c_d(k,n)$, is the minimum value of $\delta$ such that all $k$-uniform hypergraphs $G$ on $n$ vertices with $\delta_d(G)\ge \delta$ satisfy (\ref{eq-h-ge-random}), that is,
    $$h(G)\ge\frac{n}{k}\ln\left(\frac{k}{n}\,\frac{\binom{n}{d}}{\binom{k}{d}}\,\delta_d(G)\right).$$
\end{definition}

Moreover, Theorem \ref{thm-main} asserts that $c_d(k,n)\le(1-\frac{1}{k })\frac{n }{n-1}\binom{n-d}{k-d}+1$, so counting thresholds are always bounded away from $\binom{n-d }{k-d}$ for fixed $d$ and $k$. On the other hand, this definition is ``too precise'' for the applications we need it for: every application of Theorem \ref{thm-kwan} gives an error term of $o(n)$ in the exponent, so we would not lose much by permitting the same error term in the definition of the counting threshold. This is the goal of the following definition.

\begin{definition}\label{def-approx-count}
    Let $1\le d\le k-1$ be positive integers. Then the $d$-degree $k$-uniform \emph{approximate counting threshold}, denoted $\beta_d(k)$, is the minimum $\beta>0$ such that for all $\gamma,\epsilon>0$, there exists $n_0$ such that for all $n\ge n_0$, if $G$ is a $k$-uniform $n$-vertex hypergraph with $\delta_d(G)\ge(\beta+\gamma)\binom{n-d}{k-d}$, then it satisfies
    $$h(G)\ge \frac{n}{k}\ln\left((1-\epsilon)\frac{k}{n}\,\frac{\binom{n}{d}}{\binom{k}{d}}\,\delta_d(G)\right).$$
\end{definition}

Again, this definition makes sense, as $\beta$ is bounded away from $1$ by $1/k$ by Theorem \ref{thm-main}. Moreover, $\beta$ is clearly upper-bounded by $\limsup_{n\to\infty} c_d(k,n)/\binom{n-d}{k-d}$. It is less clear that it is also upper-bounded by $\liminf_{n\to\infty} c_d(k,n)/\binom{n-d}{k-d}$. We dedicate the remainder of this section to the proof of this fact, for which we will need a few lemmas. The first one lets us combine fractional perfect matchings of subhypergraphs to obtain a fractional perfect matching of the original hypergraph, with the entropy bounded in terms of the entropies of subhypergraphs. Its proof relies in part on the methods of Kwan, Safavi, and Wang in \cite{kwan2026counting}, which will be exploited even further in the next section.

\begin{lemma}\label{le-ent-comb}
    Let $k,q,n$ be positive integers. Let $V$ be a set of $n$ vertices, and let $G$ and $H$ be a $k$-uniform hypergraph and a $q$-uniform hypergraph on vertex set $V$, respectively. Let $\fpmy:E(H)\to\Rpos$ be a fractional perfect matching of $H$, and for each $f\in E(H)$, let $\fpm^f:E(G[f])\to\Rpos$ be a fractional perfect matching of the subgraph of $G$ induced by $f$. Then
    \begin{equation*}
        \fpm:E(G)\to\Rpos, \hspace{7pt} e\mapsto\sum_{e\subseteq f\in E(H)}\fpmy_f \fpm^f_e
    \end{equation*}
    defines a fractional perfect matching of $G$ that satisfies
    \begin{equation*}
        h(\fpm)\ge \sum_{f\in E(H)}\fpmy_f h(\fpm^f) + \frac{q }{k }h(\fpmy) - \frac{n }{k }\ln \binom{n-k}{q-k}.
    \end{equation*}
\end{lemma}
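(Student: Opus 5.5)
The proof has two parts: check that $\fpm$ is a fractional perfect matching, then compare $h(\fpm)$ with the entropy of a finer weighting indexed by pairs $(e,f)$. Throughout we assume $q\ge k$, which is implicit in the statement since otherwise $G[f]$ has no edges, and we regard $G[f]$ as the $k$-uniform hypergraph on vertex set $f$ whose edges are the edges of $G$ contained in $f$.

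\emph{Fractional perfect matching.} Fix $v\in V$ and exchange the order of summation:
$$\sum_{e\ni v}\fpm_e=\sum_{f\in E(H)}\fpmy_f\sum_{v\in e\subseteq f}\fpm^f_e.$$
The inner sum is $1$ if $v\in f$, because $\fpm^f$ is a fractional perfect matching of $G[f]$. It is $0$ if $v\notin f$, since then no edge $e\subseteq f$ contains $v$. Hence the total is $\sum_{f\ni v}\fpmy_f=1$. Nonnegativity of $\fpm$ is clear, and $\fpm_e\le 1$ follows from the vertex sums.

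\emph{Entropy of the refined weighting.} Introduce $w_{e,f}:=\fpmy_f\fpm^f_e$ for pairs $e\subseteq f$ with $e\in E(G)$ and $f\in E(H)$, so that $\fpm_e=\sum_f w_{e,f}$. By the computation (\ref{eq-pm-sum}) applied to $G[f]$, which has $q$ vertices, we have $\sum_{e\subseteq f}\fpm^f_e=q/k$. Expanding $\ln w_{e,f}=\ln\fpmy_f+\ln\fpm^f_e$ then gives
$$-\sum_{e,f}w_{e,f}\ln w_{e,f}=\sum_f\fpmy_f h(\fpm^f)-\sum_f\fpmy_f\ln\fpmy_f\sum_{e\subseteq f}\fpm^f_e=\sum_f\fpmy_f h(\fpm^f)+\frac qk h(\fpmy).$$

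\emph{Grouping loss.} For each $e$ with $\fpm_e>0$, the quantity
$$-\sum_f w_{e,f}\ln w_{e,f}+\fpm_e\ln\fpm_e=\fpm_e\cdot\Big(-\sum_f \tfrac{w_{e,f}}{\fpm_e}\ln\tfrac{w_{e,f}}{\fpm_e}\Big)$$
is $\fpm_e$ times the entropy of a probability distribution on $\{f\in E(H):f\supseteq e\}$. That set has at most $\binom{n-k}{q-k}$ elements, so this entropy is at most $\ln\binom{n-k}{q-k}$.

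Summing over $e$ and using $\sum_e\fpm_e=n/k$ from (\ref{eq-pm-sum}) yields
$$h(\fpm)\ge -\sum_{e,f}w_{e,f}\ln w_{e,f}-\frac nk\ln\binom{n-k}{q-k},$$
which combined with the previous display is the claim.

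The only delicate point is this last grouping inequality: the entropy of the coarser distribution $\fpm$ can drop below that of $w$, and we bound the drop by the log of the number of $f$ containing a given $e$. Zero weights are handled by the convention $0\ln 0=0$.
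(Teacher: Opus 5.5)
Your proof is correct and is essentially the paper's argument. The paper checks the vertex sums by the same exchange of summation. Its key step is Jensen's inequality for $x\mapsto x\ln(1/x)$ over the $n_e\le\binom{n-k}{q-k}$ edges of $H$ containing $e$, which is exactly your bound on the entropy of the conditional distribution on $\{f\supseteq e\}$. It then finishes with the same expansion of $\ln(y_f x^f_e)$, using $\sum_{e\subseteq f}x^f_e=q/k$ and $\sum_e x_e=n/k$.
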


\begin{proof}
    First we check that $\fpm$ defines a fractional perfect matching of $G$. It is clear that it is nonnegative, so take any $v\in V$ and note that
    $$\sum_{v\in e\in E(G)}\fpm_e=\sum_{v\in e\in E(G)}\sum_{e\subseteq f\in E(H)} \fpmy_f\fpm^f_e=\sum_{v\in f\in E(H)}\fpmy_f\sum_{v\in e\in E(G[f])}\fpm^f_e = \sum_{v\in f\in E(H)}\fpmy_f=1.$$
    So $\fpm$ really is a fractional perfect matching.

    Next we show the lower bound on the entropy of $\fpm$. For $e\in E(G)$, let $n_e$ denote the number of edges of $H$ containing $e$. By definition, we have:
    \begin{align*} 
        h(\fpm) &=\sum_{e\in E(G)}\fpm_e\ln\frac{1}{\fpm_e} = \sum_{e\in E(G)}\fpm_e\ln\frac{1}{n_e }\frac{n_e }{\fpm_e} \\
        &= -\sum_{e\in E(G)}(\ln n_e)\fpm_e + \sum_{e\in E(G)}n_e\frac{\fpm_e}{n_e}\ln\frac{n_e }{\fpm_e} \\
        &\ge -\ln\binom{n-k}{q-k}\sum_{e\in E(G)}\fpm_e +\sum_{e\in E(G)}n_e\cdot\frac{\sum_{f\supseteq e}\fpmy_f\fpm^f_e}{n_e}\ln\frac{n_e }{\sum_{f\supseteq e}\fpmy_f\fpm^f_e} \\
        &\ge -\frac{n }{k }\ln\binom{n-k}{q-k} + \sum_{e\in E(G)}n_e\sum_{f\supseteq e}\frac{1}{n_e }\fpmy_f\fpm^f_e\ln\frac{1}{\fpmy_f\fpm^f_e} \\
        &= -\frac{n }{k }\ln\binom{n-k}{q-k} + \sum_{e\in E(G)}\sum_{f\supseteq e}\fpm^f_e\fpmy_f\ln\frac{1}{\fpmy_f} + \sum_{e\in E(G)}\sum_{f\supseteq e}\fpmy_f\fpm^f_e\ln\frac{1}{\fpm^f_e} \\
        &= -\frac{n }{k }\ln\binom{n-k}{q-k} + \sum_{f\in E(H)}\left(\fpmy_f\ln\frac{1}{\fpmy_f}\right)\sum_{e\in E(G[f])}\fpm^f_e + \sum_{f\in E(H)}\fpmy_f\sum_{e\in E(G[f])}\fpm^f_e\ln\frac{1}{\fpm^f_e} \\
        &= -\frac{n }{k }\ln\binom{n-k}{q-k} + \frac{q }{k }\sum_{f\in E(H)}\fpmy_f\ln\frac{1}{\fpmy_f} + \sum_{f\in E(H)}\fpmy_f h(\fpm^f) \\
        &= -\frac{n }{k }\ln\binom{n-k}{q-k} + \frac{q }{k }h(\fpmy) + \sum_{f\in E(H)}\fpmy_f h(\fpm^f),
    \end{align*}
    where for the first inequality we used that $n_e\le \binom{n-k}{q-k}$ for all $e\in E(G)$, for the second we used (\ref{eq-pm-sum}) and applied Jensen's inequality to the concave function $x\mapsto x\ln\frac{1}{x}$, and for the fifth equality we again used (\ref{eq-pm-sum}). This completes the proof.
\end{proof}

The following lemma is essentially that of Ferber and Kwan \cite{ferber2023counting}, adapted to treat $1$-degree instead of $0$-degree.

\begin{lemma}\label{le-rooted}
    Let $1\le d\le k-1$ be integers, and let $\eta>0$ be a real constant. There exists $c=c(k)>0$ such that for all sufficiently large $q$ and $n$, the following holds.

    Let $G$ be a $k$-uniform $n$-vertex hypergraph satisfying $\delta_d(G)\ge p\binom{n-d}{k-d}$ for some $p\in[0,1]$, and let $v\in V(G)$ be a fixed vertex. Let $S$ be a $q$-subset of $V(G)$ chosen uniformly among all subsets that contain $v$. Then with probability at least $1-\binom{q}{d}e^{-c\eta^2 q}$, the subgraph of $G$ induced by $S$ has minimum $d$-degree at least $\delta_d(G[S])\ge(p-\eta)\binom{q-d}{k-d}$.
\end{lemma}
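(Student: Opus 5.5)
We will prove Lemma \ref{le-rooted} with a union bound over the $d$-subsets of $S$, handling separately the $d$-sets that contain the root $v$ and those that do not. The natural model is to write $S=\{v\}\cup S'$, where $S'$ is a uniformly random $(q-1)$-subset of $V(G)\setminus\{v\}$. Fix a $d$-set $A$. The key step is to bound the probability that $A\subseteq S$ and, at the same time, $\deg_{G[S]}A<(p-\eta)\binom{q-d}{k-d}$. We want this to be at most $\Pr[A\subseteq S]\cdot e^{-c\eta^2 q}$, up to lower-order corrections. Summing over $A$ then gives an expected number of bad $d$-sets in $S$ of at most $\mathbb{E}\binom{|S|}{d}e^{-c\eta^2 q}=\binom{q}{d}e^{-c\eta^2q}$, and Markov's inequality finishes the proof.

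Now condition on $A\subseteq S$. The set $S\setminus A$ is then a uniform random subset of $V\setminus A$ of size $q-d$, intersected with the constraint $v\in S$. There are two cases.
\begin{itemize}
\item If $v\in A$, then $S\setminus A$ is a uniform $(q-d)$-subset of $V\setminus A$.
\item If $v\notin A$, then $S\setminus A$ consists of $v$ together with a uniform $(q-d-1)$-subset $T$ of $V\setminus(A\cup\{v\})$.
\end{itemize}

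The degree $\deg_{G[S]}A$ counts the $(k-d)$-sets $B\subseteq S\setminus A$ for which $A\cup B\in E(G)$. In the first case this is a $U$-statistic of order $k-d$ of a uniform random subset without replacement. Its mean is
\[
\binom{q-d}{k-d}\frac{\deg_d A}{\binom{n-d}{k-d}}\ge p\binom{q-d}{k-d}.
\]
Changing one element of the sample changes the count by at most $\binom{q-d-1}{k-d-1}$, which is a $\frac{k-d}{q-d}$ fraction of $\binom{q-d}{k-d}$. McDiarmid's inequality for sampling without replacement, or equivalently the Hoeffding/Janson-type concentration used by Ferber and Kwan, then gives a lower-tail bound of the form $\exp(-c\eta^2 q)$ with $c$ depending only on $k$.

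In the second case, drop the edges that use $v$. The remaining count, taken over $(k-d)$-subsets of $T$, has mean at least
\[
p\binom{q-d-1}{k-d}\ge \left(p-\tfrac{\eta}{2}\right)\binom{q-d}{k-d}
\]
for $q$ large, because the ratio $\binom{q-d-1}{k-d}\big/\binom{q-d}{k-d}=1-\frac{k-d}{q-d}\to1$. The mean over $V\setminus(A\cup\{v\})$ is also only negligibly perturbed once $n$ is large. Applying the same concentration inequality with deviation $\eta/2$ gives the bound, with $c$ replaced by $c/4$.

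The main obstacle is this concentration step. We must make sure that the bounded-differences constant yields an exponent that is linear in $q$ and uniform in $n$. We must also handle the slight shift in the mean that comes from rooting at $v$ and from working in $V\setminus A$ rather than $V$; for the latter, the normalising count is $\binom{n-d}{k-d}$ or $\binom{n-d-1}{k-d}$. Both shifts are absorbed by choosing $q$, and then $n$, sufficiently large relative to $\eta$ and $k$. Since all constants depend only on $k$ and $\eta$, we obtain the claimed probability $1-\binom{q}{d}e^{-c\eta^2q}$.
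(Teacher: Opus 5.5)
Your proposal is correct and follows essentially the same route as the paper. Both condition on $A\subseteq S$, split according to whether $v\in A$, discard the edges through $v$ when $v\notin A$, bound the conditional mean, apply an Azuma/McDiarmid-type bound with Lipschitz constant $O(q^{k-d-1})$, and finish with a union bound. Your expected-count-plus-Markov version of that last step is a cleaner justification of the paper's ``union bound over the $\binom{q}{d}$ subsets of $S$'', where $S$ is random. One small point: for $v\notin A$ the conditional mean is $\bigl(p-O(1/n)\bigr)\binom{q-d-1}{k-d}$ rather than literally at least $p\binom{q-d-1}{k-d}$, but your choice of $n$ large already absorbs this.
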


\begin{proof}
    Let $d,k,\eta$ be as in the statement of the lemma. We will choose $c$ implicitly later. Suppose also that $n$ and $q$ are large.
    
    For a $d$-set $A\in\binom{V(G)}{d}$, let $X_A$ denote $\deg_d^{G[S]}A$ (i.e. the $d$-degree of $A$ in the induced subgraph $G[S]$) conditioned on $S\supseteq A$. We distinguish two cases: $A\ni v$ and $A\not\ni v$.

    For the first case, $d$ members of $S$ are already chosen, and the remaining $q-d$ vertices are chosen uniformly from the set of $n-d$ total vertices. Therefore, each of at least $p\binom{n-d}{k-d}$ edges of $G$ containing $A$ is found in $S$ with probability $\binom{n-k}{q-k}/\binom{n-d}{q-d}=\binom{q-d}{k-d}/\binom{n-d}{k-d}$, so by linearity of expectation $\expect X_A\ge p\binom{q-d}{k-d}$.

    On the other hand, when $A\not\ni v$, there are two types of edges we are concerned about: those that contain $v$ in addition to $A$ and those that do not. However, we will simply ignore the first type to keep the calculations simple. To this end, note that there are at most $\binom{n-d-1}{k-d-1}$ edges of $G$ that contain $A\cup\{v\}$, so there are at least $p\binom{n-d}{k-d}-\binom{n-d-1}{k-d-1}$ edges that contain $A$, but not $v$. Since the remaining vertices of $S$ on top of $A$ and $v$ are chosen uniformly from a set of $n-d-1$ possible vertices, each of these edges appears in $S$ with probability $\binom{q-d-1}{k-d}/\binom{n-d-1}{k-d}$, so that
    $$\expect X_A\ge \left(p\binom{n-d}{k-d}-\binom{n-d-1}{k-d-1}\right)\frac{\binom{q-d-1}{k-d}}{\binom{n-d-1}{k-d}}=\frac{p(n-d)-(k-d)}{n-k }\frac{q-k }{q-d }\binom{q-d}{k-d}\ge \left(p-\frac{\eta}{2}\right)\binom{q-d}{k-d}$$
    for $q$ and $n$ large. So in both cases we got $\expect X_A\ge\left(p-\frac{\eta}{2}\right)\binom{q-d}{k-d}$.

    Now, given $A$, the remaining $q-d$ or $q-d-1$ vertices of $S$ are chosen uniformly at random. If we expose these random vertices one at a time, we note that replacing one vertex by another can change $X_A$ by at most $\binom{q}{k-d-1}$. Then an application of the Azuma-Hoeffding inequality gives
    $$\prob\left[X_A\le (p-\eta)\binom{q-d}{k-d}\right]\le \exp\left(-\frac{(\eta/2)^2\binom{q-d}{k-d}^2}{2\binom{q}{k-d-1}^2(q-d)}\right)\le e^{-c\eta^2q}$$
    for small enough $c=c(k)>0$. Since this holds for arbitrary $A\in\binom{V(G)}{d}$, by applying a union bound to all $\binom{q}{d}$ $d$-subsets of $S$ we conclude.
\end{proof}

For integers $1\le d\le k-1$, write $\ctinf:= \liminf_{n\to\infty} c_d(k,n)/\binom{n-d}{k-d}$. We are now ready to prove the promised fact.

\begin{theorem}
    Let $1\le d\le k-1$ be integers. Then for every $\gamma,\epsilon>0$, there exists $n_0$ such that for all $n\ge n_0$, the following holds. If $G$ is a $k$-uniform $n$-vertex hypergraph with $\delta_d(G)\ge (\ctinf+\gamma)\binom{n-d}{k-d}$, then the entropy of $G$ satisfies
    $$h(G)\ge \frac{n}{k}\ln\left((1-\epsilon)\frac{k}{n}\,\frac{\binom{n}{d}}{\binom{k}{d}}\,\delta_d(G)\right).$$
    In other words, $\beta_d(k)\le\ctinf$.
\end{theorem}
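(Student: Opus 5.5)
The plan is to decompose $G$ into random induced subhypergraphs on $q$ vertices, for a fixed large $q$ at which the counting threshold is small. On each piece we apply the definition of $c_d(k,q)$, and then glue the pieces together with Lemma \ref{le-ent-comb}. The outer weights come from a nearly complete $q$-uniform hypergraph $H$, whose entropy is controlled by Theorem \ref{thm-1-deg}.

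\textbf{Choosing the parameters.} Write $p:=\delta_d(G)/\binom{n-d}{k-d}\ge\ctinf+\gamma$. Fix $\eta>0$ small, with $\eta<\gamma/2$ and $(1-\eta/\gamma)$ close to $1$. Since $\ctinf$ is a $\liminf$, there are infinitely many $q$ with $c_d(k,q)\le(\ctinf+\gamma/2)\binom{q-d}{k-d}$. Choose such a $q$ large enough that Lemma \ref{le-rooted} applies and that $\theta:=\binom{q}{d}e^{-c\eta^2q}$ satisfies $\theta<1/(2q)$ and $1-\theta\ge 1-\epsilon/2$. This is possible because $\theta$ decays exponentially in $q$. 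Then take $n_0$ large in terms of $q$. Note that $q\mid n$ is not needed, since entropy and Theorem \ref{thm-1-deg} make sense without divisibility.

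\textbf{Building $H$.} Call a $q$-set $f\subseteq V$ \emph{good} if $\delta_d(G[f])\ge(p-\eta)\binom{q-d}{k-d}$. Let $H$ be the $q$-uniform hypergraph of good $q$-sets. By Lemma \ref{le-rooted}, applied with each vertex $v$ as the root, every vertex lies in at least $(1-\theta)\binom{n-1}{q-1}$ good sets, so $\delta_1(H)\ge(1-\theta)\binom{n-1}{q-1}$. The threshold in Theorem \ref{thm-1-deg} is
$$\frac{n}{q}\binom{n-2}{q-2}=\frac{n(q-1)}{q(n-1)}\binom{n-1}{q-1}\approx\left(1-\frac1q\right)\binom{n-1}{q-1}.$$
Hence $\theta<1/(2q)$ and $n$ large ensure the hypothesis holds. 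Theorem \ref{thm-1-deg} then provides a fractional perfect matching $\fpmy$ of $H$ with
$$h(\fpmy)\ge\frac{n}{q}\ln\left((1-\theta)\binom{n-1}{q-1}\right).$$
For each $f\in E(H)$ we have $\delta_d(G[f])\ge(p-\eta)\binom{q-d}{k-d}\ge c_d(k,q)$. The definition of the counting threshold then gives a fractional perfect matching $\fpm^f$ of $G[f]$ with
$$h(\fpm^f)\ge\frac qk\ln\left(\frac kq\frac{\binom qd}{\binom kd}(p-\eta)\binom{q-d}{k-d}\right)=\frac qk\ln\left(\frac kq\binom qk(p-\eta)\right).$$

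\textbf{Combining.} Apply Lemma \ref{le-ent-comb}, using $\sum_f\fpmy_f=n/q$ from (\ref{eq-pm-sum}). This gives
$$h(G)\ge\frac nk\left[\ln\left(\frac kq\binom qk(p-\eta)\right)+\ln\left((1-\theta)\binom{n-1}{q-1}\right)-\ln\binom{n-k}{q-k}\right].$$
Next use the identity
$$\frac{\binom{n-1}{q-1}}{\binom{n-k}{q-k}}=\frac qn\,\frac{\binom nk}{\binom qk}.$$
With it, the argument of the logarithm collapses to $\frac kn\binom nk p\,(1-\eta/p)(1-\theta)$. Finally, $\binom nk p=\frac{\binom nd}{\binom kd}\delta_d(G)$, and $(1-\eta/p)(1-\theta)\ge 1-\epsilon$ by the choice of $\eta$ and $q$, which gives the claimed bound.

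\textbf{Main obstacle.} The delicate step is the degree condition for $H$. The failure probability from Lemma \ref{le-rooted} must beat the $1/q$ slack in Theorem \ref{thm-1-deg}, and this has to hold uniformly over all roots $v$. That is exactly why Lemma \ref{le-rooted} is stated in rooted form, and why $q$ is taken large along the $\liminf$ subsequence before $n$ is chosen.
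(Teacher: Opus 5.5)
Your proposal is correct and follows the paper's proof essentially step for step: you use Lemma \ref{le-rooted} to make the good $q$-sets form a nearly complete $q$-uniform $H$, Theorem \ref{thm-1-deg} for the outer weights, the counting threshold $c_d(k,q)$ on each piece, and Lemma \ref{le-ent-comb} to glue, and the same binomial identity collapses the final bound. Your bookkeeping is, if anything, slightly more careful than the paper's: you get uniformity in $p$ via $\eta/p\le\eta/\gamma$, and you avoid the paper's unjustified restriction to $q$ divisible by $k$, which is not needed.
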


\begin{proof}
    Let $G$ be as in the statement of the theorem and let $p=\delta_d(G)/\binom{n-d}{k-d}\ge \ctinf+\gamma$. By definition of $\ctinf$, there are arbitrarily large integers $q$ divisible by $k$ such that $c_d(k,q)\le (\ctinf+\gamma/2)\binom{q-d}{k-d}$.
    
    Fix one such $q$, and call a $q$-set $S$ of vertices of $V(G)$ ``good'' if $\delta_d(G[S])\ge (p-\eta)\binom{q-d}{k-d}$ for some $0<\eta<\gamma/2$. Let $H$ be a $q$-uniform hypergraph on $V(G)$ whose edges are all good $q$-sets of $G$. Then by Lemma \ref{le-rooted}, $\delta_1(H)\ge (1-\rho)\binom{n-1}{q-1}$ for $\rho=\rho(\eta,q):=\binom{q}{d}e^{-c\eta^2q}$ and some constant $c>0$. Since $\rho$ goes to $0$ exponentially fast as $q$ grows to infinity with $\eta$ fixed, for $q$ sufficiently large it holds that $\rho<\frac{1}{q}\frac{n}{n-1}-\frac{1}{n-1}$, so by Theorem \ref{thm-1-deg} the entropy of $H$ is at least $h(H)\ge \frac{n }{q }\ln\delta_1(H)$, achieved by some fractional perfect matching $\fpmy:E(H)\to\Rpos$.

    By definition of the hypergraph $H$, every edge $f\in E(H)$ satisfies $\delta_d(G[f])\ge (\ctinf+\gamma/2)\binom{q-d}{k-d}\ge c_d(k,q)$, so there exists a fractional perfect matching $\fpm^f$ of $G[f]$ with
    $$h(\fpm^f)\ge \frac{q}{k }\ln\left(\frac{k }{q}\frac{\binom{q}{d}}{\binom{k}{d }}\delta_d(G[f])\right)\ge \frac{q}{k }\ln\left((p-\eta)\binom{q-1}{k-1}\right).$$
    Applying Lemma \ref{le-ent-comb} then gives a fractional perfect matching $\fpm$ of $G$ that satisfies
    \begin{align*}
        h(\fpm) &\ge \sum_{f\in E(H)}\fpmy_f h(\fpm^f)+\frac{q}{k}h(\fpmy)-\frac{n }{k }\ln\binom{n-k}{q-k} \\
        &\ge \frac{q}{k }\ln\left((p-\eta)\binom{q-1}{k-1}\right)\sum_{f\in E(H)} \fpmy_f+\frac{q}{k }\frac{n }{q}\ln\left((1-\rho)\binom{n-1}{q-1}\right)-\frac{n }{k }\ln\binom{n-k}{q-k} \\
        &\ge \frac{n }{k }\ln\left((p-\eta)(1-\rho)\binom{q-1}{k-1}\frac{\binom{n-1}{q-1}}{\binom{n-k}{q-k}}\right) \\
        &\ge \frac{n }{k }\ln\left((1-\epsilon)p\binom{n-1}{k-1}\right) \\
        &= \frac{n }{k }\ln\left((1-\epsilon)\frac{k}{n}\,\frac{\binom{n}{d}}{\binom{k}{d}}\,\delta_d(G)\right),
    \end{align*}
    where the fourth inequality holds whenever $1/\eta$ and $q$ are large enough. This completes the proof.
\end{proof}

Note that even though we are unable to prove that $c_d(k,n)/\binom{n-d}{k-d}$ tends to a limit as $n\to\infty$, the previous theorem is almost equally good in practice since an application of Theorem \ref{thm-kwan} keeps the same order of the error term, that is, $o(n)$.

%% file: approx-counting.tex
\section{Bounds for approximate counting thresholds}\label{sec-approx-count}

Recall that Theorem \ref{thm-main} gives an upper bound on counting thresholds that is only a simple extension of the bound for $1$-degree, so it does not really take $d$ into account. However, it is still possible to obtain better general bounds on $\beta_d(k)$ by relating them to those of hypergraphs with smaller uniformity. This section pursues that goal. Our ideas rely in part on the methods of Kwan, Safavi, and Wang in \cite{kwan2026counting}.

First, one can reduce the case $(d,k)$ to $(1,k/d)$ whenever $d\mid k$. The resulting bound is then $1-d/k$ instead of the $1-1/k$ bound given by Theorem \ref{thm-main}.

\begin{theorem}\label{thm-ent-div}
    For all integers $1\le d\le k-1$ such that $d\mid k$, it holds that $\beta_d(k)\le \beta_1(k/d)$.
\end{theorem}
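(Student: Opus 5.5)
The plan is to reduce to $(k/d)$-uniform hypergraphs on \emph{blocks} of $d$ vertices. We then average over random block partitions, using the same Jensen-type computation as in Lemma \ref{le-ent-comb}. For simplicity, first assume $d\mid n$; the general case is discussed at the end.

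\textbf{Step 1: the block hypergraph.} Let $G$ satisfy $\delta_d(G)=p\binom{n-d}{k-d}$ with $p\ge\beta_1(k/d)+\gamma$. For a partition $P$ of $V$ into $n/d$ blocks of size $d$, let $G_P$ be the $(k/d)$-uniform hypergraph on the $n/d$ blocks. A $(k/d)$-set of blocks is an edge of $G_P$ if and only if its union is an edge of $G$.

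\textbf{Step 2: concentration of block degrees.} Fix a $d$-set $B$ and condition on $B$ being a block. The remaining blocks form a uniformly random partition of $V\setminus B$. Each of the $\deg_d(B)\ge p\binom{n-d}{k-d}$ edges $e\supseteq B$ becomes a union of blocks with probability
\[
\binom{(n-d)/d}{(k-d)/d}\Big/\binom{n-d}{k-d}.
\]
Hence $\expect\deg^{G_P}_1(B)\ge p\binom{n/d-1}{k/d-1}$.

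A switching of two vertices between blocks changes this degree by at most $O(n^{k/d-2})$. So a martingale (Azuma/McDiarmid for random partitions) argument gives the following: $\deg_1^{G_P}(B)\ge(p-\eta)\binom{n/d-1}{k/d-1}$ fails with probability $e^{-\Omega(n)}$. A union bound over all $\binom nd$ sets $B$ then shows that all but an $e^{-\Omega(n)}$ fraction of partitions $P$ are \emph{good}, meaning $\delta_1(G_P)\ge(p-\eta)\binom{n/d-1}{k/d-1}$. For $\eta<\gamma/2$ and $n$ large, the definition of $\beta_1(k/d)$ applies to each good $G_P$. It gives a fractional perfect matching $\fpm^P$ of $G_P$ with
\[
h(\fpm^P)\ge\tfrac{n}{k}\ln\!\left((1-\epsilon')\,(p-\eta)\binom{n/d-1}{k/d-1}\right).
\]
This uses that for $1$-degree and uniformity $k/d$ on $n/d$ vertices, the argument of the logarithm is $\frac{k}{n}\cdot\frac nd\cdot\frac{d}{1}\cdot\delta_1=\delta_1$. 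Each $\fpm^P$ lifts to a fractional perfect matching of $G$.

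\textbf{Step 3: mixing.} Let $\fpm=\sum_P w_P\fpm^P$, where $w$ is uniform over the $N'$ good partitions. This is a fractional perfect matching of $G$, since it is a convex combination of fractional perfect matchings. Repeat the calculation of Lemma \ref{le-ent-comb}, with $\fpmy$ replaced by $w$, "$e\subseteq f$" replaced by "$e$ is a union of blocks of $P$", and $n_e$ the number of good $P$ compatible with $e$. Using $\sum_e\fpm^P_e=n/k$, this gives
\[
h(\fpm)\ge\sum_P w_P h(\fpm^P)+\frac nk\ln N'-\frac nk\ln\max_e n_e .
\]
Here $n_e$ is at most the total number of compatible partitions. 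Also $N'\ge(1-o(1))N$, where $N=\Phi(K^d_n)$. Therefore
\[
N'/\max_e n_e\ge(1-o(1))\binom{n/d}{k/d}\Big/\binom nk .
\]
Combining this with Step 2 gives
\[
h(\fpm)\ge\frac nk\ln\!\left((1-\epsilon')(1-o(1))(p-\eta)\binom{n/d-1}{k/d-1}\frac{\binom nk}{\binom{n/d}{k/d}}\right).
\]
Since $\binom{n/d-1}{k/d-1}\binom nk/\binom{n/d}{k/d}=\frac kn\binom nk=\frac kn\frac{\binom nd}{\binom kd}\binom{n-d}{k-d}$ exactly, choosing $\eta$ small relative to $\gamma$ gives the required bound with factor $(1-\epsilon)$.

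\textbf{Main obstacle.} I expect Step 2 to be the delicate part: we need concentration strong enough to beat the union bound over all $d$-sets, which requires a careful bounded-differences argument for random perfect matchings of $K^d_n$. A secondary nuisance is $d\nmid n$. Here I would try the same averaging with partitions into $\lfloor n/d\rfloor$ blocks plus a leftover set of fewer than $d$ vertices, whose roles are randomized. The leftover vertices would be covered by fractional weights on edges carrying $O(1)$ total mass per vertex, which costs only $O(\ln n)=o(n)$ entropy. Alternatively, one could apply Lemma \ref{le-ent-comb} with random $q$-sets, $d\mid q$, as in the previous section's theorem, which avoids the divisibility issue entirely.
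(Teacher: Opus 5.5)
For $d\mid n$ your argument is correct, but it takes a genuinely different and heavier route than the paper. The paper samples no partitions. It forms a single $(k/d)$-uniform hypergraph $H$ on the vertex set $\binom{V(G)}{d}$ of \emph{all} $d$-sets, with $f$ an edge whenever $\bigcup f\in E(G)$. Every $d$-set is then a vertex, and $\deg_1^H(S)=\deg_d^G(S)\cdot\frac{(k-d)!}{(k/d-1)!\,(d!)^{k/d-1}}$ holds exactly. This is asymptotically at least $(\beta_1(k/d)+\gamma/2)\binom{\binom nd-1}{k/d-1}$, so no concentration is needed. The definition of $\beta_1(k/d)$ is applied once, to $H$; it has $\binom nd$ vertices, and the definition imposes no divisibility condition. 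The resulting $\fpmy$ is projected via $\fpm_e=\binom{n-1}{d-1}^{-1}\sum_{\bigcup f=e}\fpmy_f$, and one Jensen step pays for the constant multiplicity $k!/((k/d)!(d!)^{k/d})$.

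Your $G_P$ are exactly the induced subgraphs of $H$ on the blocks of $P$. View each $\fpm^P$ as a weighting of $E(H)$. Then $\binom{n-1}{d-1}\sum_P w_P\fpm^P$ is essentially (up to the discarded bad partitions) a fractional perfect matching of $H$, and its projection is your mixture. So your proof is a sampled version of the paper's. What it buys is that $\beta_1(k/d)$ is only ever invoked on honest $(k/d)$-graphs on $n/d$ vertices, whose matchings correspond to matchings of $G$. What it costs is Step 2 and the divisibility problem, both of which disappear once the vertex set is $\binom{V(G)}{d}$. Step 2 is not actually delicate: McDiarmid's inequality for random permutations, with Lipschitz constant $O(n^{k/d-2})$ against a deviation of order $\eta n^{k/d-1}$, gives $e^{-\Omega(\eta^2 n)}$.

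Two corrections. First, the definition of $\beta_d(k)$ quantifies over all $n\ge n_0$, so the case $d\nmid n$ must really be handled. Your first suggestion does not do it. The lifted $\fpm^P$ already covers every vertex outside the leftover set $L$ exactly once, and every edge through $L$ also meets $V\setminus L$. So any weight placed on such edges destroys exact coverage, and you do not say how to compensate. Your second suggestion does work, essentially verbatim as in the proof of $\beta_d(k)\le\tilde{\beta}_d(k)$:
\begin{itemize}
\item fix $\eta$ and choose $q$ large with $d\mid q$;
\item let $H$ be the $q$-graph of good $q$-sets, which satisfies $\delta_1(H)\ge(1-\rho)\binom{n-1}{q-1}$ by Lemma~\ref{le-rooted}, so Theorem~\ref{thm-1-deg} applies to it;
\item apply your $d\mid q$ case to each $G[f]$ with $f\in E(H)$;
\item combine via Lemma~\ref{le-ent-comb} and the identity $\binom{q-1}{k-1}\binom{n-1}{q-1}=\binom{n-1}{k-1}\binom{n-k}{q-k}$.
\end{itemize}

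Second, in Step 3 the ratio is inverted. The bound $\max_e n_e\le N\binom{n/d}{k/d}/\binom nk$ gives $N'/\max_e n_e\ge(1-o(1))\binom nk/\binom{n/d}{k/d}$, which is what your final display correctly uses.
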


\begin{proof}
    Fix $d$ and $k$ as in the statement of the theorem. Take arbitrary $\gamma,\epsilon>0$, and let $G$ be a $k$-uniform hypergraph on $n$ vertices with $\delta_d(G)\ge(\beta_1(k/d)+\gamma)\binom{n-d}{k-d}$, where $n$ is large enough. We need to show that
    $$h(G)\ge \frac{n}{k}\ln\left((1-\epsilon)\frac{k}{n}\,\frac{\binom{n}{d}}{\binom{k}{d}}\,\delta_d(G)\right).$$
    
    Let $H$ be an auxiliary $k/d$-uniform hypergraph on vertex set $\binom{V(G)}{d}$. Its edges are the sets $f\in\binom{V(H)}{k/d}$ such that $\bigcup f\in E(G)$, where $\bigcup f$ denotes $\bigcup_{U\in f} U$. Then for every $S\in \binom{V(G)}{d}$,
    \begin{align*}
        \deg^H_1S &=\left(\deg_d^GS\right)\frac{(k-d)!}{(k/d-1)!(d!)^{k/d-1}} \\
        &\ge (\beta_1(k/d)+\gamma)\binom{n-d}{k-d}\frac{(k-d)!}{(k/d-1)!(d!)^{k/d-1}} \\
        &\ge (\beta_1(k/d)+\gamma/2)\binom{\binom{n}{d}-1}{k/d-1},
    \end{align*}
    as long as $n$ is large enough (in terms of $\gamma$). Under the same hypothesis, now in terms of both $\gamma$ and $\epsilon$, it follows from the definition of $\beta_1(k/d)$ that there exists a fractional perfect matching $\fpmy:E(H)\to\Rpos$ of $H$ such that
    \begin{equation}\label{eq-ent-div}
        h(\fpmy)\ge \frac{\binom{n}{d}}{k/d}\ln\Big((1-\epsilon)\delta_1(H)\Big).
    \end{equation}

    For $e\in E(G)$ and $f\in E(H)$, write $e\sim f$ whenever $\bigcup f=e$, and let $a=1/\binom{n-1}{d-1}$. Define an edge-weighting $\fpm$ of $G$ by $\fpm_e:= a\sum_{e\sim f\in E(H)} \fpmy_f$. Then $\fpm$ is nonnegative. Moreover, for all $v\in V(G)$,
    \begin{align*}
        \sum_{e\ni v}\fpm_e &= a\sum_{e\ni v}\sum_{f\sim e} \fpmy_f = a\sum_{\substack{f\in E(H) \\ v\in\bigcup f}}\sum_{e\sim f}\fpmy \\
        &= a\sum_{\substack{f\in E(H) \\ v\in\bigcup f}}\fpmy_f = a\sum_{\substack{S\in\binom{V(G)}{d} \\ v\in S}} \sum_{S\in f\in E(H)} \fpmy_f \\
        &= a\sum_{\substack{S\in\binom{V(G)}{d} \\ v\in S}}1 = a\binom{n-1}{d-1} = 1,
    \end{align*}
    so $\fpm$ is a fractional perfect matching of $G$.
    
    Let $N=\frac{k!}{(k/d)!(d!)^{k/d}}$ be the number of edges $f\in E(H)$ such that $e\sim f$ for any fixed edge $e\in E(G)$. Then
    \begin{align*}
        h(G) \ge h(\fpm) &= \sum_{e\in E(G)}\Bigg( a\sum_{f\sim e}\fpmy_f\Bigg)\ln\frac{1}{aN }\frac{N }{\sum_{f\sim e}\fpmy_f} \\
        &= a\left(\ln\frac{1}{aN}\right)\sum_{f\in E(H)}\fpmy_f + aN\sum_{e\in E(G)} \frac{\sum_{f\sim e}\fpmy_f }{N}\ln\frac{N }{\sum_{f\sim e}\fpmy_f} \\
        &\ge \frac{n}{k}\ln\left(\frac{(k/d)!(d!)^{k/d}}{k!}\binom{n-1}{d-1}\right) + aN\sum_{e\in E(G)}\sum_{f\sim e}\frac{1}{N }\fpmy_f\ln\frac{1}{\fpmy_f} \\
        &= \frac{n}{k}\ln\left(\frac{(k/d)!(d!)^{k/d}}{k!}\binom{n-1}{d-1}\right) + ah(\fpmy) \\
        &\ge \frac{n}{k}\ln\left((1-\epsilon)\frac{(k/d)!(d!)^{k/d}}{k!}\binom{n-1}{d-1}\delta_1(H)\right) \\
        &= \frac{n }{k }\ln\left((1-\epsilon)\frac{(k/d)\,d!}{k\dots(k-d+1)}\binom{n-1}{d-1}\delta_d(G)\right) \\
        &= \frac{n}{k}\ln\left((1-\epsilon)\frac{k}{n}\,\frac{\binom{n}{d}}{\binom{k}{d}}\,\delta_d(G)\right),
    \end{align*}
    where the first inequality follows by Jensen's inequality applied to the concave function $x\mapsto x\ln(1/x)$, and the second one follows from (\ref{eq-ent-div}). This concludes the proof.
\end{proof}

Similarly, one can obtain a slightly better bound for $(d,k)$ by reducing it to the case $(d-l,k-l)$ for any $l<d$.

\begin{theorem}\label{thm-ent-sub}
    For all integers $1\le l < d \le k-1$, it holds that $\beta_d(k)\le \beta_{d-l}(k-l)$.
\end{theorem}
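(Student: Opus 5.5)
We reduce to the case $(d-l,k-l)$ by working on link hypergraphs of $l$-sets and then averaging their fractional perfect matchings. Throughout, fix $\gamma,\epsilon>0$ and let $G$ be $k$-uniform on $n$ vertices with $\delta_d(G)\ge(\beta_{d-l}(k-l)+\gamma)\binom{n-d}{k-d}$.

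\textbf{Step 1: links of $l$-sets.} For each $L\in\binom{V(G)}{l}$, consider the link $G_L$. This is the $(k-l)$-uniform hypergraph on $V(G)\setminus L$, which has $n-l$ vertices, with edges $e\setminus L$ for $L\subseteq e\in E(G)$. For any $(d-l)$-set $A$ disjoint from $L$ we have $\deg^{G_L}_{d-l}A=\deg^G_d(A\cup L)$. Hence
$$\delta_{d-l}(G_L)\ge\delta_d(G)\ge(\beta_{d-l}(k-l)+\gamma)\binom{(n-l)-(d-l)}{(k-l)-(d-l)}.$$
For $n$ large, the definition of $\beta_{d-l}(k-l)$ then gives a fractional perfect matching $\fpm^L$ of $G_L$. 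Using $\delta_{d-l}(G_L)\ge\delta_d(G)$ and $\binom{n-l}{d-l}$ in place of $\binom{n}{d}$, it satisfies
$$h(\fpm^L)\ge\frac{n-l}{k-l}\ln\left((1-\epsilon)\frac{k-l}{n-l}\frac{\binom{n-l}{d-l}}{\binom{k-l}{d-l}}\delta_d(G)\right).$$

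\textbf{Step 2: averaging.} Define $\fpm_e:=a\sum_{L\in\binom{e}{l}}\fpm^L_{e\setminus L}$ with a normalising constant $a$. For a vertex $v$, the sum $\sum_{e\ni v}\fpm_e$ splits into two parts. The sets $L\not\ni v$ contribute $a\binom{n-1}{l}$, since $\fpm^L$ covers $v$ exactly once. Each set $L\ni v$ contributes $a\sum_{g}\fpm^L_g=a\frac{n-l}{k-l}$ by (\ref{eq-pm-sum}), and there are $\binom{n-1}{l-1}$ such sets. The total is
$$a\left[\binom{n-1}{l}+\binom{n-1}{l-1}\frac{n-l}{k-l}\right]=a\binom{n-1}{l-1}\frac{n-l}{l}\cdot\frac{k}{k-l},$$
which is independent of $v$. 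So choosing $a$ as the reciprocal of this constant makes $\fpm$ a fractional perfect matching.

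\textbf{Step 3: entropy bound.} Each edge $e$ receives exactly $N=\binom{k}{l}$ terms. Applying Jensen's inequality to $x\mapsto x\ln(1/x)$ exactly as in the proof of Theorem \ref{thm-ent-div} gives
$$h(\fpm)\ge\frac{n}{k}\ln\frac{1}{aN}+a\sum_L h(\fpm^L).$$
By (\ref{eq-pm-sum}), $\sum_L\sum_g\fpm^L_g=\binom{n}{l}\frac{n-l}{k-l}$, and by the choice of $a$ this equals $\frac{n}{k}\cdot\frac{1}{a}$. So the second term is $\frac{n}{k}$ times a weighted average of the per-link logarithms. Plugging in Step 1 leaves the claim
$$\frac{1}{aN}\cdot\frac{k-l}{n-l}\cdot\frac{\binom{n-l}{d-l}}{\binom{k-l}{d-l}}=\frac{k}{n}\cdot\frac{\binom{n}{d}}{\binom{k}{d}}.$$
This is a finite binomial identity, up to $1+o(1)$ factors that can be absorbed into $\epsilon$, and I expect it to hold exactly or asymptotically. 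Note $\binom{n}{d}/\binom{n-l}{d-l}=\binom{n}{l}/\binom{d}{l}$, and similarly for $k$; also $\frac{1}{aN}=\binom{n}{l}\frac{k}{k-l}/\binom{k}{l}$.

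\textbf{Main obstacle.} The delicate points are the normalisation in Step 2 and the bookkeeping in Step 3. The vertices of $L$ itself lie outside $G_L$, so the coverage count must be split into the cases $v\in L$ and $v\notin L$, as done above. One must then verify the final constant identity exactly rather than only to first order, since any $\Theta(1)$ discrepancy inside the logarithm would ruin the bound.
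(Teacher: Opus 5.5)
Your proposal is correct and follows essentially the same route as the paper: links of the $l$-sets $U$, the averaged weighting $\fpm_e=a\sum_{U\in\binom{e}{l}}\fpm^U_{e\setminus U}$ with the same normalising constant $a$ (split into the cases $v\in U$ and $v\notin U$), and Jensen's inequality with $N=\binom{k}{l}$. The one slip is in your side formula for $\frac{1}{aN}$: Step 2 gives $1/a=\frac{k}{k-l}\binom{n-1}{l}$, so $\frac{1}{aN}=\frac{k}{k-l}\binom{n-1}{l}/\binom{k}{l}$ rather than the expression with $\binom{n}{l}$, and with this correct value the final identity holds exactly, since $\binom{n-1}{l}/(n-l)=\binom{n}{l}/n$ and $\binom{n}{l}\binom{n-l}{d-l}=\binom{d}{l}\binom{n}{d}$ (and likewise with $k$ in place of $n$), which is precisely the identity the paper uses to conclude.
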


\begin{proof}
    The proof is very similar to that of Theorem \ref{thm-ent-div}. Fix $l$, $d$, and $k$ as in the statement of the theorem. Take arbitrary $\gamma,\epsilon>0$, and let $G$ be a $k$-uniform hypergraph on $n$ vertices with $\delta_d(G)\ge(\beta_{d-l}(k-l)+\gamma)\binom{n-d}{k-d}$, where $n$ is large enough. We need to show that
    $$h(G)\ge \frac{n}{k}\ln\left((1-\epsilon)\frac{k}{n}\,\frac{\binom{n}{d}}{\binom{k}{d}}\,\delta_d(G)\right).$$

    Given an $l$-set $U$ of vertices of $G$, let $H^U$ denote the $(k-l)$-uniform link hypergraph of $G$ at $U$, that is, a hypergraph with vertex set $V(G)\setminus U$ and edges all $(k-l)$-subsets $f$ such that $U\cup f\in E(G)$. For every $S\in\binom{V(H^U)}{d-l}$, we have $\deg_{d-l}^{H^U} S=\deg_d^G(U\cup S)\ge (\beta_{d-l}(k-l)+\gamma)\binom{(n-l)-(d-l)}{(k-l)-(d-l)}$. Provided that $n$ is large enough in terms of $\gamma$ and $\epsilon$, it follows from the definition of $\beta_{d-l}(k-l)$ that for every $U\in\binom{V(G)}{l}$, there exists a fractional perfect matching $\fpm^U:E(H^U)\to\Rpos$ of $H^U$ such that
    \begin{equation}\label{eq-ent-sub}
        h(\fpm^U)\ge \frac{n-l }{k-l }\ln\left((1-\epsilon)\frac{k-l }{n-l }\frac{\binom{n-l}{d-l}}{\binom{k-l}{d-l}}\delta_{d-l}(H^U)\right).
    \end{equation}

    For simplicity, given $e\in E(G)$, we write $\fpm^U_e$ for $\fpm^U_{e\setminus U}$. Let $a=\frac{k-l}{k}\frac{n }{n-l }\frac{1 }{\binom{n}{l}}$ and define an edge-weighting $\fpm$ of $G$ by $\fpm_e:=a\sum_{U\in\binom{e}{l}} \fpm^U_e$ for $e\in E(G)$. Then $\fpm$ is clearly nonnegative. Moreover, for all $v\in V(G)$,
    \begin{align*}
        \sum_{e\ni v}\fpm_e &= a\sum_{e\ni v}\sum_{U\in\binom{e}{l}}\fpm^U_e \\
        &= a\sum_{U'\in\binom{V(G)\setminus\{v\}}{l-1}}\sum_{e\supseteq U'\cup\{v\}}\fpm^{U'\cup\{v\}}_e + a\sum_{U\in \binom{V(G)\setminus\{v\}}{l}}\sum_{e\supseteq U\cup\{v\}}\fpm^U_e \\
        &= a\sum_{U'\in\binom{V(G)\setminus\{v\}}{l-1}}\frac{n-l}{k-l} + a\sum_{U\in\binom{V(G)\setminus\{v\}}{l}}1 \\
        &= a\binom{n-1}{l-1}\frac{n-l }{k-l }+a\binom{n-1}{l} \\
        &= \frac{a(n-1)!}{(l-1)!(n-l-1)!}\left(\frac{1}{k-l }+\frac{1}{l }\right) \\
        &= a\binom{n-1}{l-1}\frac{(n-l)k }{(k-l)l }=1,
    \end{align*}
    so $\fpm$ is a fractional perfect matching of $G$.

    Let $N=\binom{k}{l}$. Then
    \begin{align*}
        h(G)\ge h(\fpm)&=\sum_{e\in E(G)}\Bigg(a\sum_{U\in \binom{e}{l}}\fpm^U_e\Bigg) \ln\frac{1}{aN }\frac{N }{\sum_{U\in\binom{e}{l}}\fpm^U_e} \\
        &= a\left(\ln\frac{1}{aN}\right)\sum_{U\in\binom{V(G)}{l}}\sum_{e\supseteq U}\fpm^U_e + aN\sum_{e\in E(G)}\frac{\sum_{U\in\binom{e}{l}}\fpm^U_e }{N }\ln\frac{N }{\sum_{U\in\binom{e}{l}}\fpm^U_e } \\
        &\ge a\left(\ln\frac{1}{aN}\right)\sum_{U\in\binom{V(G)}{l}}\frac{n-l }{k-l } + aN\sum_{e\in E(G)}\sum_{U\in\binom{e}{l}}\frac{1}{N }\fpm^U_e\ln\frac{1}{\fpm^U_e } \\
        &= a\frac{n-l }{k-l }\binom{n}{l}\ln\frac{1}{aN } + a\sum_{U\in\binom{V(G)}{l}}h(\fpm^U) \\
        &\ge \frac{n }{k }\ln\frac{1}{aN} + a\sum_{U\in\binom{V(G)}{l}}\frac{n-l}{k-l}\ln\left((1-\epsilon)\frac{k-l }{n-l }\frac{\binom{n-l}{d-l}}{\binom{k-l}{d-l}}\delta_{d-l}(H^U)\right) \\
        &\ge \frac{n }{k }\ln\frac{1}{aN} + a\binom{n}{l}\cdot\frac{n-l}{k-l}\ln\left((1-\epsilon)\frac{k-l }{n-l }\frac{\binom{n-l}{d-l}}{\binom{k-l}{d-l}}\delta_d(G)\right) \\
        &= \frac{n }{k }\ln\left((1-\epsilon)\frac{k }{n}\frac{\binom{n}{l}\binom{n-l}{d-l}}{\binom{k}{l}\binom{k-l}{d-l}}\delta_d(G)\right) \\
        &= \frac{n}{k}\ln\left((1-\epsilon)\frac{k}{n}\,\frac{\binom{n}{d}}{\binom{k}{d}}\,\delta_d(G)\right),
    \end{align*}
    where the first inequality follows by Jensen's inequality applied to the concave function $x\mapsto x\ln(1/x)$, the second one follows from (\ref{eq-ent-sub}), the third one uses the fact that $\delta_{d-l}(H^U)\ge \delta_d(G)$ for every $U\in\binom{V(G)}{l}$ (with equality for at least one $U$), and the last equality follows from
    $$\frac{\binom{n}{l}\binom{n-l}{d-l}}{\binom{k}{l}\binom{k-l}{d-l}}=\frac{\binom{n}{d}}{\binom{k}{d}}.$$
    This concludes the proof.
\end{proof}

%% file: concl.tex
\section{Concluding remarks}\label{sec-concl}

In this paper we introduced (approximate) counting thresholds for $d$-degrees in $k$-uniform hypergraphs, above which a hypergraph's entropy is guaranteed to be large enough to ensure that the hypergraph contains at least as many perfect matchings as it is expected in a random hypergraph with the same edge density, i.e.\ with edge probability $p=\delta_d(G)/\binom{n-d}{k-d}$. Unlike the case $d\ge k/2$, where these thresholds are known to coincide, when $d<k/2$ they can diverge, and in this paper we initiate their study by showing some general upper bounds and a substitute for the convergence result available for Dirac thresholds.

Determining exact values of (approximate) counting thresholds could be an interesting problem for further research on the topic. For example, it follows from Theorem \ref{thm-main} that the approximate $1$-degree $3$-uniform counting threshold $\beta_1(3)$ is at most $1-1/3=2/3$. One can ask whether this is sharp, but unfortunately, we believe the answer is no. Consider Lisa Sauermann's counterexample from \cite[Section 5]{ferber2023counting}, which shows that Dirac and counting thresholds may diverge for $d<k/2$: it is a ``bipartite'' hypergraph $H_{\epsilon}$ with vertex parts $A$ and $B$ of sizes $|A|=(1/3+\epsilon)n$ and $B=(2/3-\epsilon)n$ for some small $\epsilon>0$, and edges all $3$-sets that do not have all three vertices from the same part. Calculations show that the hypergraph has the normalised minimum $1$-degree of at least $5/9$ (which is the Dirac threshold in this case), but can contain no more than $(1+o_{\epsilon}(1))^n\Phi(\multicomp 3n)\left(\frac{4}{9}\right)^{n/3}$ perfect matchings, which is smaller than the expected number in the random hypergraph.

We believe that a family of such bipartite hypergraphs can also provide a sharp lower bound for (approximate) counting thresholds, at least in the case $(d,k)=(1,3)$. In particular, calculations show that for $0<\epsilon<1/6$, the minimum $1$-degree of $H_{\epsilon}$ is approximately $\delta_1=(\frac{1}{3}+\epsilon)(\frac{5}{6}-\frac{\epsilon}{2})n^2$, while the number of perfect matchings is
\begin{align*}
    \ln\Phi(H_{\epsilon})=\ln\Phi(\multicomp 3n) + \frac{n}{3}\Bigg(&(1+3\epsilon)\ln\left(\frac{1}{3}+\epsilon\right) - (1-3\epsilon)\ln\left(\frac{1}{3}-\epsilon\right) \\
    &+ (2-3\epsilon)\ln\left(\frac{2}{3}-\epsilon\right) - 3\epsilon\ln\epsilon\Bigg) + o(n).
\end{align*}
This is larger than the desired $\ln\Phi(\multicomp 3n)+\frac{n }{3}\ln(\delta_1/\binom{n-1}{2})$ only when $\epsilon\gtrsim 0.04847$. In other words, we believe $\beta_1(3)\approx 0.61783\ll2/3$. This is also suggested by computer search.

\begin{acknowledgements}
    Parts of this paper come from the semester project that the author did as a Master student at ETH Z\"urich, Switzerland. The author would like to thank Barnab\'as Janzer, Zhihan Jin, and Benny Sudakov for their supervision of this project. The author would also like to thank Matthew Kwan, Farhood Rostamkhani, and Yiting Wang for useful comments and for spotting some mistakes.
\end{acknowledgements}

%% file: append.tex
\section{Proof of Theorem \ref{thm-pmd}}\label{sec-append}

The proof we present here is an adaptation of the original proof due to Hoffman \cite{hoffman1965nonsingularity}. Somewhat surprisingly, one single point in Theorem \ref{thm-pmd} implies all the others. This point is given by the following theorem.

\begin{theorem}\label{thm-inv-of-spmd}
    All strictly PMD matrices are invertible.
\end{theorem}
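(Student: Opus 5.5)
Suppose for contradiction that $A$ is strictly PMD but singular, and pick a nonzero $x$ with $Ax=0$. We normalise $x$ so that its largest entry in absolute value equals $1$; replacing $x$ by $-x$ if necessary, we may assume $x_i=1$ at some index $i$ with $|x_j|\le 1$ for all $j$. We will derive a contradiction from row $i$ of $Ax=0$. It is convenient to write $r_i=\frac1n\sum_j a_{ij}$ for the row mean. Strict mean dominance gives $a_{ik}<r_i$ for $k\ne i$, and strict positivity of the row sum gives $r_i>0$.

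The key trick is to subtract the row mean from every entry. Set $b_{ij}=a_{ij}-r_i$, so that $b_{ij}<0$ for $j\ne i$ and $\sum_j b_{ij}=0$. The latter forces $b_{ii}=-\sum_{j\ne i}b_{ij}>0$. Row $i$ of $Ax=0$ then reads
\[
0=\sum_j a_{ij}x_j=\sum_j b_{ij}x_j+r_i\sum_j x_j .
\]
Since $b_{ij}<0$ for $j\ne i$ and $x_j\le 1$, we have $\sum_j b_{ij}x_j=b_{ii}+\sum_{j\ne i}b_{ij}x_j\ge b_{ii}+\sum_{j\ne i}b_{ij}=0$, with equality only if $x_j=1$ for all $j\ne i$. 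Hence $r_i\sum_j x_j\le 0$, so $\sum_j x_j\le 0$.

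This alone is not a contradiction, so we also use the minimal entry of $x$. Suppose first that $x\ne\mathbf 1$. Let $m=\min_j x_j$, attained at some index $l$. Because $\sum_j x_j\le 0$ and $\max_j x_j=1>0$, we get $m<0$, and $|m|\le 1$. Applying the same computation to row $l$ with the inequality $x_j\ge m$ gives $\sum_j b_{lj}x_j\le b_{ll}m+\sum_{j\ne l}b_{lj}m=0$. Hence $r_l\sum_j x_j\ge 0$, and since $r_l>0$ this yields $\sum_j x_j\ge 0$. Therefore $\sum_j x_j=0$, and both inequalities above are equalities. Equality in row $i$ forces $x_j=1$ for all $j\ne i$, while $x_i=1$ by choice, so $x=\mathbf 1$. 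This contradicts both $x\ne\mathbf 1$ and $\sum_j x_j=0$. In the remaining case $x=\mathbf 1$, row $i$ gives $\sum_j a_{ij}=0$, contradicting the strict positivity of that row sum.

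The main obstacle is noticing that a single extremal row is not enough. Row $i$ only bounds $\sum_j x_j$ from above, and we need the dual argument at the minimal entry to bound it from below. The subtraction of the row mean is what makes each row's contribution sign-definite. The rest is routine bookkeeping of the equality cases.
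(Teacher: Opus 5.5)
Your proof is correct, and it is more elementary than the paper's. The paper follows Hoffman. It encodes strict mean dominance as the entrywise inequality $A_{i*}M_i>0$, where $M_i$ has columns $v_k=-e_k+\frac{1}{n}\mathbf{1}_n$ with $v_i$ replaced by $\mathbf{1}_n$. Two claims about the $v_k$ (any $n-1$ of them form a basis of $\mathbf{1}_n^{\perp}$, and every vector of $\mathbf{1}_n^{\perp}$ is a conical combination of at most $n-1$ of them) show that the cones $\pm C(M_i)$ cover $\R^n$. Lemma~\ref{le-inv-of-cons-cover} then turns a kernel vector $y\ne 0$ lying in some $\pm C(M_i)$ into the contradiction $0=A_{i*}y>0$.

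Your decomposition $a_{ij}=b_{ij}+r_i$ is this argument made explicit. One checks that $C(M_i)=\{y:\ y_i=\max_j y_j,\ \sum_j y_j\ge 0\}$. Your identity $A_{i*}y=\sum_j b_{ij}y_j+r_i\sum_j y_j$ writes $A_{i*}y$ as a sum of two terms that are nonnegative on this cone and vanish together only at $y=0$. So both proofs rest on the same inequality, but you bypass the cone machinery and the two claims, much as in the L\'evy--Desplanques proof for diagonally dominant matrices.

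The comparison also shows that your remark that ``a single extremal row is not enough'' comes from your normalisation. Suppose you instead replace $x$ by $-x$ so that $\sum_j x_j\ge 0$; this is what the covering by $\pm C(M_i)$ amounts to. Then the row of a maximal entry alone forces both terms to vanish, so $x$ is constant with zero sum, i.e.\ $x=0$. Two small points: your bound at the minimal row holds whatever the sign of $m$, so the case split on $x\ne\mathbf{1}$ is unnecessary. Also, $b_{ii}>0$ fails when $n=1$, though you never use it.

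What the paper's formulation buys in exchange is modularity. Lemma~\ref{le-inv-of-cons-cover} is a general invertibility criterion, and varying the matrices $M_i$ is how one obtains the weighted and sign-reversed generalisations mentioned in the appendix's closing remarks.
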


We postpone the proof and first show how Theorem \ref{thm-inv-of-spmd} implies Theorem \ref{thm-pmd}.

\begin{proof}[Proof of Theorem \ref{thm-pmd} assuming Theorem \ref{thm-inv-of-spmd}]
    First let $A$ be a PMD $n\times n$ matrix. For $\lambda>0$, $\det(A+\lambda I_n)$ is a monic polynomial in $\lambda$, so for $\lambda$ large enough, we have $\det(A+\lambda I_n)>0$. Suppose $\det A<0$. By continuity of the determinant, there is $\lambda_0>0$ such that $\det(A+\lambda_0I_n)=0$. But $A+\lambda_0I_n$ is a \emph{strictly} PMD matrix, so $\det(A+\lambda_0I_n)\neq 0$ by Theorem \ref{thm-inv-of-spmd}, a contradiction. This shows $\det A\ge0$. If $A$ is strictly PMD, we conclude again by Theorem \ref{thm-inv-of-spmd} that $\det A>0$.

    Now suppose that $A$ is an invertible PMD matrix. Then $A^{-1}=\frac{1}{\det A}C(A)^{\top}$, where $C(A)$ is the cofactor matrix of $A$. By the first part of the proof and the fact that $\det A\neq 0$, we have $\det A>0$, so that the sign of the column sums of $A^{-1}$ is equal to the sign of the row sums of $C(A)$, and it suffices to show that these are nonnegative.

    Let us denote by $A^{ij}$ the submatrix of $A$ with $i$-th row and $j$-th column removed. Let $1\le i\le n$. Then $\sum_{j=1}^n C(A)_{ij}=\sum_{j=1}^n(-1)^{i+j}\det(A^{ij})=\det A_{i\leftrightarrow\one_n}$, where $A_{i\leftrightarrow\one_n}$ is the matrix $A$ with the $i$-th row replaced by the all-ones vector $\one_n$. But $A_{i\leftrightarrow \one_n}$ is a PMD matrix as well, which implies that $\det A_{i\leftrightarrow \one_n}\ge 0$, thus concluding the proof.
\end{proof}

With some more effort, one can show that the column sums of the inverse of \emph{strictly} PMD matrices are themselves \emph{strictly} positive, and this is done by Hoffman \cite{hoffman1965nonsingularity}. However, we do not need this additional result in this paper, so we omit its proof. 

It remains to prove Theorem \ref{thm-inv-of-spmd}.

\subsection{Proof of Theorem \ref{thm-inv-of-spmd}}
    
The proof uses some basic theory of convex cones.

\begin{definition}\label{def-cone}
    Let $M$ be an $n\times m$ real matrix. The \emph{convex cone} with respect to $M$ is the subset $C(M)$ of $\R^n$ of the form $\{Mx\mid x\in\R_{\ge0}^m\}$.
\end{definition}

Equivalently, a subset $C\subseteq\R^n$ is a convex cone if for every $x\in C$ and $\alpha\ge 0$, $\alpha x\in C$ as well.

\begin{lemma}\label{le-inv-of-cons-cover}
    Let $n$ be a positive integer and let $M_1,\dots,M_n$ be $n\times m_i$, $i=1,\dots,n$, real matrices, for some positive integers $m_i$, such that the cones $C_i=C(M_i)$ and their opposites cover the whole space:
    \begin{equation}\label{eq-cons-cover}
        \bigcup_{i=1}^n (\pm C_i)=\R^n.
    \end{equation}
    Let $A$ be an $n\times n$ real matrix and suppose $A_{i*}M_i>0$ holds for all $i=1,\dots,n$, where $A_{i*}$ denotes the $i$-th row of $A$. Then $A$ is invertible.
    
\end{lemma}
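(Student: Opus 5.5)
We argue by contradiction. Suppose $A$ is singular, so there is a nonzero $x\in\R^n$ with $Ax=0$. By the covering hypothesis (\ref{eq-cons-cover}), $x\in C_i$ or $x\in -C_i$ for some index $i$. Replacing $x$ by $-x$ if needed (the kernel is closed under negation), we may assume $x\in C_i$. Then $x=M_iy$ for some $y\in\R_{\ge0}^{m_i}$, and $y\neq 0$ because $x\neq0$.

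Now look at the $i$-th coordinate of $Ax$. It equals
$$(Ax)_i=A_{i*}x=A_{i*}M_iy=\sum_{j=1}^{m_i}(A_{i*}M_i)_j\,y_j.$$
The hypothesis $A_{i*}M_i>0$ says that every entry of the row vector $A_{i*}M_i$ is strictly positive. Since $y\ge0$ and $y\ne0$, the sum is a sum of nonnegative terms with at least one strictly positive term, so $(Ax)_i>0$. This contradicts $Ax=0$, so $A$ is invertible.

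The proof has essentially no hard step. The only points needing care are the two observations just used: the kernel of $A$ is symmetric under negation, which lets us handle the opposite cones $-C_i$ in (\ref{eq-cons-cover}); and $y\neq0$, which is needed to get a strict rather than weak inequality. The positivity $A_{i*}M_i>0$ must be read entrywise for this argument to work. The real work lies later, in producing suitable cones $C_i$ for a strictly PMD matrix, and is not part of this lemma.
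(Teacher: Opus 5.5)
Your proof is correct and is essentially the paper's argument. Both take a nonzero kernel vector, use the covering together with the symmetry of the kernel under negation to place it in some $C_i$, and write it as $M_i$ times a nonzero nonnegative vector. This gives the contradiction $0=A_{i*}M_iy>0$, with the positivity $A_{i*}M_i>0$ read entrywise.
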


\begin{proof}
    Assume that (\ref{eq-cons-cover}) holds and suppose $A$ is such that $A_{i*}M_i>0$ for all $i=1,\dots,n$. For the sake of contradiction, suppose $A$ is not invertible. Then there exists $y\in\R^n\setminus\{0\}$ such that $Ay=0$. By (\ref{eq-cons-cover}), $y\in C_i$ or $-y\in C_i$ for some $i$ and, without loss of generality, we can assume that $y\in C_i$ holds, that is, there exists $x\in\R_{\ge0}^{m_i}\setminus\{0\}$ such that $y=M_ix$. This gives, combined with the hypothesis on $A_{i*}M_i$:
    $$0=A_{i*}y=A_{i*}M_ix>0,$$
    which is the desired contradiction. 
\end{proof}

We can now proceed to the proof of Theorem \ref{thm-inv-of-spmd}. Suppose $A$ is strictly PMD. The condition (\ref{eq-pmd1}) can be restated as $A_{i*}\one_n>0$ for all $i$, and similarly, the condition (\ref{eq-pmd2}) is equivalent to $A_{i*}v_k>0$ for all $i\neq k$, where $v_i=-e_i+\frac{1}{n}\one_n$. Therefore, $A$ is strictly PMD if and only if $A_{i*}M_i>0$ holds for all $i$, where $M_i=(v_1\,\dots\,v_n)_{i\leftrightarrow \one_n}$, that is, a matrix with column vectors $v_1,\dots,v_n$, where $v_i$ was replaced by the all-ones vector $\one_n$.

We want to apply Lemma \ref{le-inv-of-cons-cover} to show that $A$ is invertible. For $M_1,\dots,M_n$ as above, $A_{i*}M_i>0$ for all $i$ is already given by the strict positive mean-dominance of $A$, so it suffices to show 
$$\bigcup_{i=1}^n\left(\pm C(M_i)\right)=\R^n.$$

\begin{claim}\label{cl-vi-independent}
    For all $i=1,\dots,n$, $\langle v_i,\one_n\rangle=0$ and any $n-1$ of the vectors $v_1,\dots,v_n$ form a basis of $\one_n^{\perp}$, the normal subspace of $\one_n$.
\end{claim}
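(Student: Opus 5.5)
The claim has two parts, and I will handle them in order: first the orthogonality, then the basis statement.

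\emph{Orthogonality.} Recall that $v_i=-e_i+\frac{1}{n}\one_n$. We have $\langle e_i,\one_n\rangle=1$ and $\langle \one_n,\one_n\rangle=n$. Hence
$$\langle v_i,\one_n\rangle=-1+\tfrac{1}{n}\cdot n=0,$$
so every $v_i$ lies in $\one_n^{\perp}$. This subspace has dimension $n-1$.

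\emph{Basis.} Since $\dim\one_n^{\perp}=n-1$, it suffices to show that any $n-1$ of the $v_i$ are linearly independent. By symmetry, consider $v_1,\dots,v_{n-1}$. Suppose $\sum_{i=1}^{n-1}c_iv_i=0$, and set $s=\sum_{i=1}^{n-1} c_i$. Then
$$-\sum_{i=1}^{n-1}c_ie_i+\frac{s}{n}\one_n=0.$$
Reading off the $n$-th coordinate, where no $e_i$ with $i\le n-1$ contributes, gives $s/n=0$, so $s=0$. Substituting back, the identity becomes $-\sum_{i\le n-1} c_ie_i=0$, and the standard basis vectors force every $c_i=0$. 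So $v_1,\dots,v_{n-1}$ are independent, and by the dimension count they form a basis of $\one_n^{\perp}$.

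\emph{Remark.} Together the $n$ vectors satisfy exactly one linear relation, $\sum_{i=1}^n v_i=-\one_n+\one_n=0$. This relation will be useful afterwards for the covering argument.

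There is no real obstacle here. The only step needing care is isolating the coordinate that does not appear among the chosen indices, which kills the $\one_n$ term.
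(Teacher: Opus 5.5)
Your proof is correct and follows the paper's argument essentially line for line. You compute the inner products directly, use the dimension count to reduce the basis claim to linear independence of $v_1,\dots,v_{n-1}$, and settle that by reading off the $n$-th coordinate; your expansion even keeps the minus sign on the $e_i$ terms, which the paper's display drops (harmlessly).
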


\begin{proof}
    The first part of the claim is immediate. For the second, note that $\dim\one_n^{\perp}=n-1$, so it suffices to show that any $n-1$ of the vectors $v_1,\dots,v_n$ are linearly independent. Without loss of generality, we will show this for vectors $v_1,\dots,v_{n-1}$. So let $\alpha_1,\dots,\alpha_{n-1}\in\R$ be such that $\sum_{i=1}^{n-1} \alpha_i v_i=0$. We want to show that $\alpha_1=\dots=\alpha_{n-1}=0$.

    We have
    $$0=\sum_{i=1}^{n-1}\alpha_i v_i=\sum_{i=1}^{n-1}\alpha_ie_i+\frac{1}{n}\left(\sum_{i=1}^{n-1}\alpha_i\right)\one_n,$$
    and taking the $n$-th coordinate, we get $0=\sum_{i=1}^{n-1}\alpha_i$. But then $0=\sum_{i=1}^{n-1}\alpha_ie_i$, which implies $\alpha_i=0$ for every $i$.
\end{proof}

\begin{claim}
    \label{cl-vi-pos-combb}
    Every vector $x\in\one_n^{\perp}$ is a conical (i.e.\ nonnegative) combination of at most $n-1$ of the vectors $v_1,\dots,v_n$.
\end{claim}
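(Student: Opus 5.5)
The key relation is $\sum_{i=1}^n v_i=-\one_n+\one_n=0$. Together with Claim \ref{cl-vi-independent}, this says $v_1,\dots,v_n$ span $\one_n^{\perp}$ with the single linear dependency $\sum_i v_i=0$. I will use this dependency to shift any representation of $x$ until its coefficients are nonnegative and one of them vanishes.

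Fix $x\in\one_n^{\perp}$. Since $v_1,\dots,v_{n-1}$ form a basis of $\one_n^{\perp}$ by Claim \ref{cl-vi-independent}, we can write $x=\sum_{i=1}^{n}\alpha_i v_i$ for some reals $\alpha_i$, taking for instance $\alpha_n=0$. Because $\sum_i v_i=0$, for any $t\in\R$ we also have
$$x=\sum_{i=1}^n(\alpha_i-t)v_i.$$
Choose $t:=\min_i\alpha_i$, and let $j$ be an index attaining this minimum. Then every coefficient $\alpha_i-t$ is nonnegative, and the coefficient of $v_j$ is zero. Hence $x=\sum_{i\neq j}(\alpha_i-t)v_i$ is a conical combination of the $n-1$ vectors $\{v_i\}_{i\ne j}$, which proves the claim.

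I do not expect a real obstacle; the only point needing care is the identity $\sum_i v_i=0$. It holds because $\sum_i e_i=\one_n$ and $\sum_i\frac{1}{n}\one_n=\one_n$, so $\sum_i v_i=-\one_n+\one_n=0$.

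For orientation, this claim is what lets us finish the covering statement $\bigcup_i(\pm C(M_i))=\R^n$ in the proof of Theorem \ref{thm-inv-of-spmd}. Any $y\in\R^n$ decomposes as $y=s\one_n+x$ with $x\in\one_n^\perp$. By the claim, $x=\sum_{i\neq j}\lambda_iv_i$ with all $\lambda_i\ge0$, which lies in the span of the columns of $M_j$. If $s\ge0$, then $y\in C(M_j)$. If $s<0$, we instead apply the claim to $-x$, which gives $-y\in C(M_{j'})$ for some $j'$.
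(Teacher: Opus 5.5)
Your proof is correct and is essentially the paper's argument: both use the single linear dependency $\sum_{i=1}^n v_i=0$ to shift every coefficient of a representation of $x$ by a common constant until all are nonnegative and one vanishes. The paper does this in two stages (first replacing each $v_i$ that has a negative coefficient by $-\sum_{j\ne i}v_j$, then subtracting the smallest remaining coefficient), whereas your single shift by $t=\min_i\alpha_i$ with $\alpha_n=0$ does both at once.
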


\begin{proof}
    Note that
    \begin{equation}
        \label{eq-vi-sum}
        \sum_{i=1}^n v_i=\one_n-\sum_{i=1}^ne_i=0.
    \end{equation}
    
    By Claim \ref{cl-vi-independent}, $v_1,\dots,v_{n-1}$ form a basis of $\one_n^{\perp}$, so $x$ can be represented as a linear combination of $v_i$-s:
    \begin{equation}\label{eq-vi-lin-comb}
        x=\alpha_1v_1+\dots+\alpha_{n-1}v_{n-1}.
    \end{equation}
    If all $\alpha_i$-s are positive, we are done. Otherwise, for all $i$ such that $\alpha_i<0$, we can replace $v_i$ in (\ref{eq-vi-lin-comb}) by $-\sum_{j\neq i}v_j$ using (\ref{eq-vi-sum}). This gives a representation of $x$ as a conical combination of possibly all $v_i$-s:
    \begin{equation}\label{eq-conic-comb}
        x=\beta_1v_1+\dots+\beta_nv_n,\hspace{0.3cm} \beta_1,\dots,\beta_n\ge0.
    \end{equation}
    If there is $i$ with $\beta_i=0$ in (\ref{eq-conic-comb}), we are done, so suppose this is not the case and take $i$ with the minimum value of $\beta_i$. By replacing again $v_i$ by $-\sum_{j\neq i}v_j$ in (\ref{eq-conic-comb}), $v_i$-term is set to $0$, while all other coefficients remain nonnegative by minimality of $\beta_i$, so we get the desired linear combination.
\end{proof}

We will now show
$$\bigcup_{i=1}^n C_i\supseteq\{y\mid\langle y,\one_n\rangle\ge0\},$$
which will then immediately imply
$$\bigcup_{i=1}^n (\pm C_i)=\R^n,$$
thus completing the proof. To this extent, let $y\in\R^n$ be such that $\langle y,\one_n\rangle\ge0$. We want to show that there is $i\in\{1,\dots,n\}$ and $x\in\R_{\ge0}^n$ such that $y=M_ix$.

Let $y=y_0+y^{\perp}$ be the (unique) decomposition of $y$ with $y_0\parallel\one_n$ and $y^{\perp}\perp\one_n$. By Claim \ref{cl-vi-pos-combb}, $y^{\perp}$ is a conical combination of at most $n-1$ vectors of $v_1,\dots,v_n$. Without loss of generality, suppose $v_1$ is excluded from this conical representation and write $y^{\perp}=\alpha_2v_2+\dots+ \alpha_{n}v_{n}$ with $\alpha_2,\dots,\alpha_{n}\ge0$. 

We have also 
$$0\le\langle y,\one_n\rangle=\langle y_0,\one_n\rangle,$$
so by posing $\alpha_1=\langle y_0,\one_n\rangle/n$, we get $\alpha_1\ge0$ as well. Since now $\alpha_1\one_n=y_0$ and $\alpha_2v_2+\dots+\alpha_nv_n=y^{\perp}$, we get $y=M_1x$ for $x=(\alpha_1\,\dots\,\alpha_n)\ge0$. This concludes the proof of Theorem \ref{thm-inv-of-spmd}. \qed

\begin{remark}
    Theorems \ref{thm-pmd} and \ref{thm-inv-of-spmd} actually generalise to an even broader class of matrices than strictly PMD matrices. Whilst in the definition of the strictly PMD matrices one takes the unweighted mean of row entries, it is possible to take a weighted mean instead, with weights shifted circularly by $k$ positions to the right in the condition (\ref{eq-pmd2}) of Definition \ref{def-pmd}. Moreover, one can even reverse the inequality sign in (\ref{eq-pmd1}) or (\ref{eq-pmd2}), and Theorems \ref{thm-pmd} and \ref{thm-inv-of-spmd} would still hold (although with some of the promised inequalities reversed as well).
    
    Since we only need the unweighted means, we restrict our presentation here to the (strictly) PMD matrices. We refer to the paper of Hoffman \cite{hoffman1965nonsingularity} for the proof of the general case.
\end{remark}

\begin{remark}
    It has been shown by Carnicer, Goodman, and Pe\~na \cite{carnicer1999linear} that the set of conditions in Definition \ref{def-pmd} is the weakest to ensure $\det A>0$ in the sense that with any of the conditions removed, there is a $n\times n$ real matrix that satisfies the remaining conditions and whose determinant is negative.
\end{remark}